\newtheorem{theorem}{Theorem}
\newtheorem{proposition}[theorem]{Proposition}
\newtheorem{lemma}[theorem]{Lemma}
\newtheorem{corollary}[theorem]{Corollary}
\newtheorem*{theorem*}{Theorem}
\newtheorem*{conjecture*}{Conjecture}
\newtheorem{definition}[theorem]{Definition}
\theoremstyle{remark}
\newcommand{\LT}{{\mathrm{LT}}}
\newcommand{\Aff}{\mathbb{A}}
\newcommand{\zrmn}{\mathcal{Z}_{r}^{m,n}}
\newcommand{\zrkmn}{\mathcal{Z}_{r,k}^{m,n}}
\newcommand{\ztwo}{\mathcal{Z}_{2}^{m,n}}
\newcommand{\ztwoone}{\mathcal{Z}_{2,2}^{m,n}}
\newcommand{\PP}{\mathbb{P}}
\newcommand{\Z}{\mathbb{Z}}
\newcommand{\Q}{\mathbb{Q}}
\newcommand{\F}{\mathbb{F}}
\newcommand{\spread}{\mathrm{sp}}
\newcommand{\Spec}{\mathrm{Spec}}
\newcommand{\supp}{\mathrm{supp}}
\newcommand{\Path}{\mathcal{P}}
\newcommand{\I}{\mathcal{I}}
\newcommand{\Inot}{\mathcal{I}_0}
\newcommand{\Dnot}{{\Delta}_0}
\begin{document}

\title{Hilbert series of certain jet schemes of determinantal varieties}

\author{Sudhir R. Ghorpade}
\address{Department of Mathematics,
Indian Institute of Technology Bombay,\newline \indent
Powai, Mumbai 400076, India.}
\email{srg@math.iitb.ac.in}

\author{Boyan Jonov}
\address{Department of Mathematics, University of California Santa Barbara,  \newline \indent
Santa Barbara, CA 93106, USA.}
\email{boyan@math.ucsb.edu}

\author{B. A. Sethuraman}
\address{Department of Mathematics, California State University Northridge,  \newline \indent
Northridge, CA 91330, USA.}
\email{al.sethuraman@csun.edu}


\date{November 6, 2013} 

\begin{abstract}
We consider the affine variety $\ztwoone$ of first order jets over $\ztwo$, where $\ztwo$ is the classical
determinantal variety given by the vanishing of all $2\times 2$ minors of a generic $m\times n$ matrix.
When $2 < m \le n$, this jet scheme $\ztwoone$ has two irreducible components: a trivial component, isomorphic to an affine space, and a nontrivial component that is the closure of the jets supported over the smooth locus of $\ztwo$. This second component is referred to as the \emph{principal component} of $\ztwoone$; it is, in fact, a cone and can also be regarded as a projective subvariety of $\PP^{2mn-1}$.
We prove that the degree 
of the principal component of $\ztwoone$ is the square of the degree 
of $\ztwo$ and more generally, the Hilbert series  of the principal component of $\ztwoone$
 is the square of the Hilbert series  of $\ztwo$. As an application, we compute the $a$-invariant of
 the principal component of $\ztwoone$ and show that the principal component of $\ztwoone$ is Gorenstein if and only if $m=n$.
\end{abstract}

\maketitle
%
%
\section{Introduction}


Let $\F$ be an algebraically closed field
%
%
and  $m,n,r$ be integers with $1\le r\le m\le n$.
Let $\zrmn$ denote the affine variety in $\Aff^{mn}_{\F}$ defined by
the vanishing of all $r \times r$ minors of an $m\times n$ matrix whose entries are independent indeterminates over $\F$.
Equivalently, $\zrmn$ is the locus of $m\times n$ matrices over $\F$ of rank $< r$.
This is a classical and well-studied object and a number of its properties are known. For example, we know that $\zrmn$ is irreducible,
rational, arithmetically Cohen-Macaulay and projectively normal. Moreover the multiplicity of $\zrmn$ (at its vertex, since $\zrmn$ is evidently a cone) or equivalently, the degree of the corresponding projective subvariety of $\PP^{mn-1}_{\F}$ is given by the following elegant formula
(cf. \cite[Rem. 20.18 and 20.19]{Ab} or \cite[Cor. 6.2]{survey}; see also 
\cite{HeTr} for an alternative proof and \cite[Ch. 2, \S 4]{ACGH}
or \cite[p. 352]{GK} for an alternative approach and a different formula):
\begin{equation}
\label{detmult}
e\left(\zrmn \right) = \det_{1\le i,j\le r-1}\left( {\binom{m+n-i-j}{m-i} }\right) .
\end{equation}
More generally, the Hilbert series of $\zrmn$ (or more precisely, of the corresponding projective subvariety of $\PP^{mn-1}_{\F}$) is also known
and is explicitly given by
\begin{equation}
\label{dethilb}
\displaystyle{\frac{ \sum_{k\ge 0} h_k t^k}{(1-t)^d}},
\end{equation}
where $d = (r-1)(m+n-r+1)$ is the dimension of $\zrmn$ (as an affine variety), and the coefficients $h_k$ are given by sums of binomial determinants as follows:
$$
h_k = \sum_{k_1 + \cdots + k_{r-1} = k}
 \det_{1\le i,j\le r-1} \left( { \binom{m-i}{k_i} } { \binom{n-j}{k_i+i-j} }
\right).
$$
For a proof of this formula, we refer to 
\cite{JSPI} (see also \cite{Galigo} and \cite{CH}). Using this (cf. \cite{CH}), or otherwise (cf. \cite{Svanes}), it can be shown that $\zrmn$ is Gorenstein if and only if $m=n$. Moreover, one can also 
show 
that the $a$-invariant of the
(homogeneous) coordinate ring of $\zrmn$ [which, by definition, is the least degree of a generator of its graded canonical module] is 
$n(1-r)$; see, e.g., \cite{Gr} or \cite[Thm. 4]{JSPI}.

We now turn to jet schemes, which have been of much recent interest due in large part to  Nash's suggestion \cite{Nash}  that jet schemes should give information about singularities of the base; see, e.g., \cite{MM1, MM2, EM}. If ${\mathcal{Z}}$ is a scheme of finite type over $\F$ and $k$ a positive integer, then a $(k-1)$-jet on ${\mathcal{Z}}$ is a morphism $\Spec \, \F[t]/(t^k) \to {\mathcal{Z}}$. The set of
$(k-1)$-jets on ${\mathcal{Z}}$ forms a scheme of finite type over $\F$, denoted ${\mathcal{J}}_{k-1}({\mathcal{Z}})$ and called the $(k-1)^{\rm th}$ jet scheme of ${\mathcal{Z}}$. A little more concretely,
suppose ${\mathcal{Z}}$
is the affine scheme  $\Spec \, S/I$ defined by the ideal $I=\left\langle f_1, \dots , f_s \right\rangle$ in the polynomial ring
$S=\F[X_1, \dots , X_N]$. 
Consider independent indeterminates $t$ and $X_i^{({\ell})}$ ($i=1, \dots, N$ and ${\ell}=0, \dots , k-1$) over $\F$ and the corresponding polynomial
ring $S^{(k)}$ in the $Nk$ variables $X_i^{({\ell})}$.  For each $j=1, \dots , s$, the polynomial $f_j\big(X_1^{(0)}+t X_1^{(1)}+ \cdots + t^{k-1}X_1^{(k-1)}, \; \dots \; , X_N^{(0)}+tX_N^{(1)}+ \cdots + t^{k-1}X_N^{(k-1)}\big)$ is of the form $f_j^{(0)} + t f_j^{(1)} + \dots + t^{k-1}f_j^{(k-1)}$ modulo $\left\langle t^k\right\rangle$ for unique $f_j^{({\ell})}\in S^{(k)}$ ($0\le \ell < k$). 
 ${\mathcal{J}}_{k-1}({\mathcal{Z}})$ is then the affine scheme $\Spec \, S^{(k)}/ I'$, where $I'$ is the ideal generated by all  $f_j^{({\ell})}$, $1\le j \le s$, $0\le \ell < k$.
 %
 %
 (Often in the literature, authors conflate the algebraic set  in $\Aff^{Nk}$ consisting of the zeros of the polynomials $f_j^{({\ell})}$ with ${\mathcal{J}}_{k-1}({\mathcal{Z}})$ itself. This is generally harmless, especially when considering topological properties such as components, since the points of this algebraic set correspond bijectively with
  the set of closed points of ${\mathcal{J}}_{k-1}({\mathcal{Z}})$  as $\F$ is algebraically closed,  and the set of closed points of an affine scheme is dense in the scheme.  See \cite[Ch. 2, Rem. 3.49]{Liu} for instance.)

When ${\mathcal{Z}}$ is smooth of dimension $d$, the jet scheme ${\mathcal{J}}_{k-1}({\mathcal{Z}})$ is known to be smooth of dimension $kd$.
In general,
${\mathcal{J}}_{k-1}({\mathcal{Z}})$ can have multiple
irreducible components, and these include a principal component that corresponds to the closure of the set of jets supported over the smooth points of the base scheme ${\mathcal{Z}}$. 
These components are usually quite complicated and interesting.
In fact, very little seems to be known about the structure of these components and their numerical invariants such as  multiplicities.
For example, even when ${\mathcal{Z}}$ is a monomial scheme such as the one given by $X_1X_2\cdots X_e=0$, where $e\le N$, determining the irreducible components and the multiplicity of ${\mathcal{J}}_{k-1}({\mathcal{Z}})$ appears to require some effort; see, e.g., \cite{GS} and \cite{yuen1}. Irreducible components of jet schemes of toric surfaces are discussed in \cite{HM}, while the
%
%
 irreducibility of jet schemes of the commuting matrix pairs scheme is discussed in \cite{SeSi}.   In a more recent work \cite{BMS}, the Hilbert series of arc spaces (that are, in a sense, limits of $k^{\rm th}$ jet schemes as $k\to \infty$) of seemingly simple objects such as the double line $y^2=0$ are shown to have connections with the Rogers-Ramanujan identities.

Now  determinantal varieties such as $\zrmn$  above 
are 
%
natural examples of singular algebraic varieties and it is not surprising that the study of their jet schemes has been of considerable interest. This was done first by Ko{\v{s}}ir and Sethuraman in \cite{KoSe1} and \cite{KoSe2} (see also Yuen \cite{yuen2}). To describe the related results, henceforth we fix positive integers $r,k,m,n$ with $r\le m\le n$, and let $\zrkmn$ denote the $(k-1)^{\rm th}$ jet scheme on $\zrmn$. It was shown in \cite{KoSe1} that $\zrkmn$ is irreducible of codimension $k(n-m+1)$ when $r=m$, and if $r<m$, then it can have $\ge 1+ \lfloor k/2 \rfloor$ irreducible components with equality when $r=2$ or $k=2$.  A more unified result has recently been obtained by Docampo \cite{DoC} who shows that $\zrkmn$ has exactly $k+1 - \lceil k/r\rceil$ irreducible components. At any rate, the best understood case with multiple components is $\ztwoone$, where $2<m\le n$. In this case $\ztwoone = Z_0\cup Z_1$, where $Z_1$ is isomorphic to $\Aff^{mn}$ while $Z_0$ is the principal component which is the closure of the jets supported over the smooth points of the base variety $\ztwo$. Here it will be convenient to consider $2mn$ indeterminates denoted $x_{i,j}, y_{i,j}$ for $1\le i\le m$, $1\le j\le n$, and  the corresponding polynomial ring $R= \F[x_{i,j}, y_{i,j} : 1\le i\le m, \; 1\le j\le n]$.
Also let $\I = \I_{2,2}^{m,n}$  and $\Inot$ denote, respectively, the ideals of $R$ corresponding to the jet scheme $\ztwoone$
and its principal component $Z_0$.
In \cite{KoSe2}, it was shown that both $\I$ and $\Inot$ are homogeneous radical ideals of $R$ (so that $\Inot$ is prime) and moreover,
their Gr\"obner bases were explicitly determined.  The leading term ideal
%
%
$\LT(\Inot)$ of $\Inot$ with respect to this Gr\"obner basis is generated by squarefree monomials and hence $R/\LT(\Inot)$ is the Stanley-Reisner ring of a simplicial complex $\Delta_0$.   Jonov \cite{boyan} subsequently studied this simplicial complex. He  showed that  $\Delta_0$ is shellable 
and thus deduced that $R/\Inot$ is Cohen-Macaulay. 
(This last result was independently obtained by Smith and Weyman as well in \cite{SW}, using their geometric technique for computing syzygies.)
Jonov also
found a formula for the multiplicity of $R/\Inot$, 
namely,
\begin{equation}
\label{mult}
e(R/\Inot) = {\mathop{\sum_{i=1}^m\sum_{j=1}^n}_{(i,j)\ne (m,n)}} {\binom{m+n-i-j}{m-i}} \det
\begin{pmatrix} {\binom{i+n-2}{i-1}} & {\binom{m+j-2}{m-1}} \\
{\binom{i+n-3}{i-2}} & {\binom{m+j-3}{m-2}} \end{pmatrix} .
\end{equation}

Equation (\ref{mult}) above is the starting point of the present paper. We first show that the right side of this equation simplifies remarkably to yield the pretty result
%
%
$$
e(R/\Inot) = {\binom{m+n-2}{m-1}}^2 = e(\ztwo)^2.
$$
(this was already mentioned in \cite[Rem. 2.8]{boyan} by way of a remark).
Next, we proceed to determine the Hilbert series of $R/\Inot$ or of the principal component $Z_0$. 
We use the well-known connections between the Hilbert series of $R/\Inot$, that of $R/\LT(\Inot)$, and the shelling of the facets of the simplicial complex $\Delta_0$ obtained in \cite{boyan}.
With some effort we are led to an initial formula for the Hilbert series of $R/\Inot$, which is enormously complicated and involves multiple 
sums of products of binomials in the same vein as the right side of \eqref{mult}.  But we persist with the combinatorics
%
and are eventually rewarded with the main result of this paper. 
Namely, just like the multiplicity, the Hilbert series of $R/\Inot$ is precisely the square of the Hilbert series of the base determinantal variety $\ztwo$. As a corollary of this, we are able to determine the $a$-invariant of $R/\Inot$ and the Hilbert series of its graded
canonical module. Moreover, we show that, as in the case of classical determinantal varieties, 
$Z_0$ is Gorenstein if and only if $m=n$.


The proofs given here are completely elementary, but highly combinatorial and rather intricate.  
%
Heuristically, it appears to us that up to some flat deformation (such as the Gr\"obner deformation of  $\Inot$ to $\LT(\Inot)$, which preserves the Hilbert series), the coordinate ring of the 
principal component (suitably deformed) should look like the tensor product of the coordinate ring of the base (similarly deformed) with itself.  (This would reflect the fact that at the smooth points, the base variety locally looks like its tangent space.)
It would follow then that the Hilbert series of the principal component is the square of that of $\ztwo$. We {emphasize} that this is only heuristics (with all of its ever-present dangers); nevertheless,
%
%
we suspect that analogous results relating the Hilbert series of the principal component to that of the base scheme should hold more generally for all $\zrkmn$, and possibly also for jet schemes over a wider class of affine base schemes. 
We do not know how to prove this, and leave it open for  investigation.


\section{Binomials and Lattice Paths}
\label{sec2}

In this section we 
collect some preliminaries concerning binomial coefficients,
alterations of summations, and lattice paths. These will be useful in the sequel.

\subsection{Binomials} To begin with, let us recall that the binomial coefficient $\binom{s}{a}$ is defined for any integer parameters $s$, $a$ 
(and with the standard convention that the empty product is taken as $1$) as follows:
%
%
$$
\binom{s}{a} = \begin{cases} \displaystyle{\frac{s(s-1)\cdots(s-a+1)}{a!}}
 & \text{ if } a\ge 0, \\ 0 & \text{ if } a<0. \end{cases}
$$
In fact, this definition makes sense not only for any $s\in \Z$ but also for $s$ in any overring of $\Z$ and in particular, $s$ can be an indeterminate over $\Q$ in which case $\binom{s}{a}$ is a polynomial in $s$ of degree $a$, provided $a\ge 0$.  Now let $s, a\in \Z$. Note that
\begin{equation}
\label{binomzero}
\binom{s}{a} = 0 \Longleftrightarrow \text{either } a <0 \text{ or } a>s\ge 0.
\end{equation}
One has to be careful with the validity of some of the familiar identities; for example,
\begin{equation}
\label{VminusA}
\binom{s}{a} = \binom{s}{s-a}\Longleftrightarrow \text{either } s  \ge 0 \text{ or } s < a < 0,
\end{equation}
whereas some standard identities such as the Pascal triangle identity or its alternative equivalent version below are valid for arbitrary integer parameters:
\begin{equation}
\label{Pascal}
\binom{s}{a-1} + \binom{s}{a} = \binom{s+1}{a}  
\text{ and } \binom{s+a}{a} + \binom{s+a}{a+1} = \binom{s+a+1}{a+1}.
\end{equation}
The equivalence 
of the two identities above follows from the simple fact below, which is also valid for arbitrary integer parameters:
\begin{equation}
\label{Twisted}
\binom{s+a}{a} = (-1)^a\binom{-s-1}{a}, \quad \text{that is,} \quad
\binom{s}{a} = (-1)^a\binom{a-s-1}{a}.
\end{equation}
We now record some basic facts, which are often used in later sections. Proofs are
easy and are briefly outlined for the sake of completeness.

\begin{lemma}
\label{GPL}
For any $e,s,t\in \Z$ with $s \le t$, we have
$$
\sum_{s<d \le t} \binom{d}{e} = \binom{t +1}{e+1} - \binom{s +1}{e+1}. 
$$
\end{lemma}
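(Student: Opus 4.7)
The plan is to recognize the sum as a telescoping sum built from Pascal's identity. Specifically, I would take the first form of identity \eqref{Pascal}, namely $\binom{s}{a-1} + \binom{s}{a} = \binom{s+1}{a}$, and apply it with $s$ replaced by $d$ and $a$ replaced by $e+1$. Rearranging yields
$$
\binom{d}{e} \;=\; \binom{d+1}{e+1} - \binom{d}{e+1},
$$
valid for all integer parameters $d, e$ since \eqref{Pascal} holds without restriction.

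The next step is to substitute this expression into the left-hand side of the lemma, so that
$$
\sum_{s < d \le t} \binom{d}{e} \;=\; \sum_{d=s+1}^{t} \left[\binom{d+1}{e+1} - \binom{d}{e+1}\right].
$$
The right-hand sum telescopes: consecutive terms cancel pairwise, leaving only the top term $\binom{t+1}{e+1}$ and the bottom term $-\binom{s+1}{e+1}$. This gives exactly the claimed formula.

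There is essentially no obstacle here. The only point worth a brief sanity check is the degenerate case $s = t$, where the left side is the empty sum (hence zero) and the right side is $\binom{t+1}{e+1} - \binom{t+1}{e+1} = 0$; this matches automatically. Since \eqref{Pascal} is valid for arbitrary integer parameters, no case analysis on the sign of $e$, $s$, or $t$ is required, and the proof is a direct one-line telescoping argument once the rearrangement of Pascal's identity is written down.
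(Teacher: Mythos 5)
Your proof is correct and is essentially the paper's argument: the paper inducts on $t-s$ using the first identity in \eqref{Pascal}, and your telescoping sum is just that induction unrolled, resting on the same rearrangement $\binom{d}{e}=\binom{d+1}{e+1}-\binom{d}{e+1}$. Nothing further is needed.
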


\begin{proof}
Induct on $t-s$,  using the 
first identity in \eqref{Pascal} to rewrite $\binom{t +1}{e+1}$. 
\end{proof}

The following result is a version of the so-called Chu-Vandermonde identity.

\begin{lemma}
\label{Chu}
For any $s,t,\alpha, \beta\in \Z$, we have
\begin{equation}
\label{Chu1}
\sum_{j\in \Z} \binom{s}{\alpha + j} \binom{t}{\beta - j} = \binom{s + t }{\alpha + \beta}
\end{equation}
and
\begin{equation}
\label{Chu2}
\sum_{j\in \Z} \binom{s + \alpha + j}{\alpha + j} \binom{t + \beta - j}{\beta - j} = \binom{s + t + \alpha + \beta + 1}{\alpha + \beta} ,
\end{equation}
where in view of \eqref{binomzero}, the summation on the left in \eqref{Chu1} as well as in \eqref{Chu2} is essentially finite
in the sense that all except finitely many summands are zero.
\end{lemma}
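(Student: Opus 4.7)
The plan is to first reduce \eqref{Chu1} to the classical Chu--Vandermonde identity, and then derive \eqref{Chu2} from \eqref{Chu1} using \eqref{Twisted}.

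For \eqref{Chu1}, after substituting $k = \alpha + j$ the sum on the left becomes $\sum_{k \in \Z}\binom{s}{k}\binom{t}{p-k}$ with $p := \alpha+\beta$, and the right-hand side becomes $\binom{s+t}{p}$. If $p < 0$, both sides vanish by \eqref{binomzero}: on the left, for every $k \in \Z$ at least one of the lower arguments $k$ and $p-k$ is negative. For $p\ge 0$, I would work in the formal power series ring $\Z[[x]]$ and expand $(1+x)^{s} = \sum_{k\ge 0}\binom{s}{k} x^{k}$ for any $s\in\Z$ (this uses the polynomial definition of $\binom{s}{k}$ for $k\ge 0$; for $s\ge 0$ the series truncates to a polynomial, while for $s<0$ it is the usual expansion of $1/(1+x)^{-s}$). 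The identity $(1+x)^{s}(1+x)^{t} = (1+x)^{s+t}$ holds in $\Z[[x]]$ for any $s,t\in\Z$, and comparing coefficients of $x^{p}$ on both sides yields \eqref{Chu1}.

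For \eqref{Chu2}, I would first apply \eqref{Twisted} to each factor of the summand:
$$
\binom{s+\alpha+j}{\alpha+j} = (-1)^{\alpha+j}\binom{-s-1}{\alpha+j}, \qquad \binom{t+\beta-j}{\beta-j} = (-1)^{\beta-j}\binom{-t-1}{\beta-j}.
$$
The product of the two signs is $(-1)^{\alpha+\beta}$, independent of $j$, so the left-hand side of \eqref{Chu2} equals $(-1)^{\alpha+\beta}\sum_{j\in\Z}\binom{-s-1}{\alpha+j}\binom{-t-1}{\beta-j}$. Applying \eqref{Chu1} with $s,t$ replaced by $-s-1,-t-1$ collapses this to $(-1)^{\alpha+\beta}\binom{-s-t-2}{\alpha+\beta}$, and a final use of \eqref{Twisted} (in the form $\binom{u}{a} = (-1)^{a}\binom{a-u-1}{a}$ with $u = -s-t-2$ and $a = \alpha+\beta$) converts this to $\binom{s+t+\alpha+\beta+1}{\alpha+\beta}$, as desired.

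Rather than a real obstacle, the only delicate point is making sure the generating-function step is valid when $s$ or $t$ is negative. I would handle this either by working directly in $\Z[[x]]$ as above, or alternatively by viewing \eqref{Chu1} (for each fixed $p\ge 0$) as a polynomial identity in the two variables $s$ and $t$ that is already known for all non-negative integer values, and hence holds identically over $\Z$.
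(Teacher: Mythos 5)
Your proof is correct and follows essentially the same route as the paper: establish \eqref{Chu1} by comparing coefficients of $X^{\alpha+\beta}$ in $(1+X)^{s}(1+X)^{t}=(1+X)^{s+t}$ in the formal power series ring (valid for all integer exponents), and then deduce \eqref{Chu2} from \eqref{Chu1} via \eqref{Twisted}. Your explicit handling of the case $\alpha+\beta<0$ and the detailed sign bookkeeping in the \eqref{Twisted} step are welcome elaborations of what the paper leaves terse, but the argument is the same.
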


\begin{proof}
Let $X$ be an indeterminate over $\Q$. 
Use the binomial theorem, namely,
$
(1+X)^d = \sum_{i=0}^{\infty} \binom{d}{i} X^i, 
$
which is valid in the formal power series ring $\Q[[X]]$ for any $d\in \Z$,
and compare the coefficients of $X^{\alpha + \beta}$ on the two sides of the identity $(1+X)^{s}(1+X)^{t} = (1+X)^{s+t}$ to obtain \eqref{Chu1}. 
Now \eqref{Chu1} and \eqref{Twisted} imply \eqref{Chu2}.
\end{proof}

%

\subsection{Alterations of Summations}
\label{subsec:alt}
As in \eqref{Chu1} and \eqref{Chu2} above, we will often deal with summations that are \emph{essentially finite}, by which we mean that the parameters in the sum range over an infinite set, but the summand is zero for all except finitely many values of parameters, and so the summation is, in fact, finite. It is, however, very useful that the parameters range freely over a seemingly infinite set so that useful alterations such as the ones listed below can be readily made. These are too obvious to be stated as lemmas and proved formally. But for ease of reference, we record below some elementary transformations of essentially finite summations. In what follows, $f:\Z^2\to \Q$ will denote a rational-valued function of two integer parameters with the property that the \emph{support} of $f$, namely, the set
$\{(s_1, s_2)\in \Z^2 : f(s_1, s_2)\ne 0\}$ is finite or more generally, it is \emph{diagonally finite}, i.e., for each $k\in \Z$, the set
$\{(s_1, s_2)\in \Z^2 : s_1 + s_2 = k \text{ and } f(s_1, s_2) \ne 0\}$ is finite. In this case, for any $\nu\in \Z$ and any $\alpha,\beta\in \Z$ such that $\alpha+\beta=\nu$, we have
\begin{equation}
\label{translation}
\sum_{s_1+s_2=k-\nu} f(s_1, s_2) = \sum_{t_1+t_2=k} f(t_1-\alpha, t_2-\beta),
\end{equation}
where writing $s_1+s_2=k-\nu$ below the first summation indicates that the sum is over all $(s_1, s_2)\in \Z^2$ satisfying $s_1+s_2=k-\nu$. A similar meaning applies for the second summation and in fact, for all such summations appearing in the sequel. Since the ``diagonal condition'' $t_1+t_2=k$ is symmetric, we also have
\begin{equation}
\label{alternating}
\sum_{t_1+t_2=k} f(t_1, t_2) = \sum_{t_1+t_2=k} f(t_2, t_1).
\end{equation}
Thus, for example, using \eqref{translation} and \eqref{alternating}, we find
$$
\sum_{t_1+t_2=k} f(t_1, t_2) = \sum_{t_1+t_2=k} f(t_2+1, t_1-1) = \sum_{t_1+t_2=k} f(t_1+1, t_2-1) .
$$

\subsection{Lattice Paths}
\label{latpaths}
Let $A=(a,a')$ and $E=(e,e')$ be points 
in the integer lattice $\Z^2$.
By a {\em lattice path} from $A$ to $E$ we mean a finite sequence
$L=(P_0, P_1, \dots , P_t)$ of points in $\Z^2$ with $P_0 =A$, $P_t =E$ and
$$
P_i - P_{i-1} = (1,0) \ {\rm or }  \ (0,1) \quad {\rm for} \ i =1, \dots ,t.
$$
The lattice path $L$ 
can and will be identified with its point set
$\{P_j : 0\le j\le t\}$; indeed, $L$ is obtained by simply arranging the elements of this set in a
lexicographic order. The point $A=P_0$ is called the \emph{initial point} of $L$ while $E=P_t$ is called the \emph{end point} of $L$.
We say that a point $P_j$ is a \emph{NE-turn} 
of the lattice path $L$ if $0<j<t$ and $P_j-P_{j-1}=(0,1)$ while $P_{j+1}-P_j=(1,0)$.  Note that
a lattice path is also determined by its NE-turns.

In more intuitive terms, a lattice path consists of vertical or horizontal
steps of length $1$, and a NE-turn is simply a North-East turn. For example,
a lattice path from $A = (1,1)$ to $E=(4,5)$ may be depicted as in Figure~\ref{onepathpdf}, and it may be noted that
the points $(1,2)$ and $(2,4)$ are its NE-turns.


\begin{figure}[t]
\centering
\includegraphics[scale=0.55]{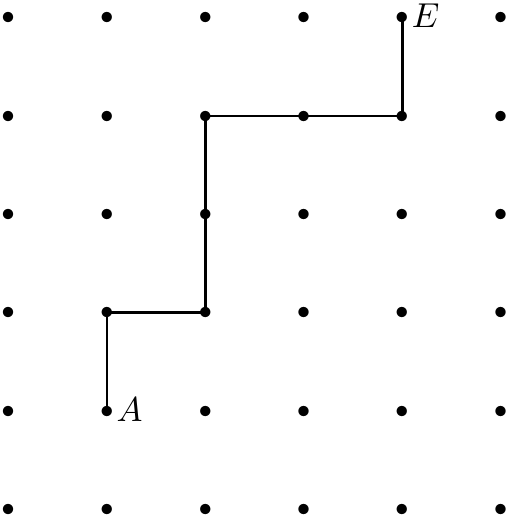}
\caption{A lattice path from $A = (1,1)$ to $E=(4,5)$} 
\label{onepathpdf}
\end{figure}


%
If we let $\Path(A\to E)$ denote the set of lattice paths from $A=(a,a')$ to $E=(e,e')$ and for any $k\in \Z$, let $\Path_k(A\to E)$ denote the subset of $\Path(A\to E)$ consisting of lattice paths with exactly $k$ NE-turns, then it is easily seen that
\begin{equation}
\label{singlepath}
\left| \Path (A\to E)\right| = {\binom{e-a+e'-a'}{e-a}} 
\text{ and } 
\left|\Path_k(A\to E)\right| = {\binom{e-a}{k}}{\binom{e'-a'}{k}},
\end{equation}
where as usual, for a finite set $\Path$, we denote by $\left|\Path\right|$ the cardinality of $\Path$.
Given any two $d$-tuples ${\mathcal{A}} = (A_1, \dots , A_d)$ and ${\mathcal{E}} = (E_1, \dots , E_d)$ of
points in $\Z^2$, by a 
{\em lattice path} from ${\mathcal{A}}$ to ${\mathcal{E}}$ we mean a $d$-tuple
${\mathcal{L}} = (L_1, \dots , L_d)$, where $L_r$ is a lattice path from $A_r$ to $E_r$,
for $1\le r\le d$. We call ${\mathcal{L}}$ to be {\em nonintersecting} if no two of the
paths $L_1, \dots , L_d$ have a point in common. We say that ${\mathcal{L}}$ has $k$ NE-turns if the total number of NE-turns in the $d$ paths $L_1, \dots , L_d$ is $k$.
The set of nonintersecting lattice paths from ${\mathcal{A}} = (A_1, \dots , A_d)$ to ${\mathcal{E}} = (E_1, \dots , E_d)$
will be denoted by $\Path\left( A_1 \to E_1, \dots , A_d \to E_d\right)$ or simply by $\Path\left(\mathcal{A} \to {\mathcal{E}}\right)$, and its subset consisting of nonintersecting lattice paths with exactly $k$ NE-turns will be denoted by $\Path_k\left( A_1 \to E_1, \dots , A_d \to E_d\right)$ or simply by $\Path_k\left(\mathcal{A} \to {\mathcal{E}}\right)$. 

\begin{proposition}
\label{GVFormula}
Let $d$ be a positive integer and let $A_r = (a_r, a'_r)$ and
$E_r = (e_r, e'_r)$, $r=1, \dots , d$, be points in $\Z^2$. 
Also let ${\mathcal{A}} = (A_1, \dots , A_d)$ and ${\mathcal{E}} = (E_1, \dots , E_d)$. 
\begin{enumerate}
	\item[{\rm (i)}] Suppose
$$
a_1 \le \dots \le a_d, \ \;
e_1 \le \dots \le e_d \quad {\rm and} \quad
a'_1 \ge  \dots \ge a'_d, \ \; e'_1  \ge \dots \ge e'_d   .
$$
Then the number of nonintersecting lattice paths from ${\mathcal{A}}$ to ${\mathcal{E}}$ is equal to 
\begin{equation}
\label{DetFormula}
\det \left( { \binom{e_j -a_i + e'_j -a'_i}{e_j - a_i} }_{1\le i, j \le d} \right)
\end{equation}
\item[{\rm (ii)}]
Let $k\in \Z$ and suppose
$$
a_1 \le \dots \le a_d, \ \;
e_1 < \dots < e_d \quad {\rm and} \quad
a'_1 >  \dots > a'_d, \ \; e'_1  \ge \dots \ge e'_d   .
$$
Then 
the number of nonintersecting lattice paths from ${\mathcal{A}}$ to ${\mathcal{E}}$ with exactly $k$ NE-turns is equal to 
\begin{equation}
\label{DetFormulaWithTurns}
\sum_{k_1+ \cdots + k_d = k} \det \left( {\binom{e_j -a_i + i -j}{k_i + i - j} } {\binom{e'_j -a'_i-i+j}{k_i} }_{1\le i, j \le d} \right)
\end{equation}
\end{enumerate}
\end{proposition}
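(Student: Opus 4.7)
For part (i), the plan is to apply the classical Lindstr\"om--Gessel--Viennot (LGV) lemma. Using the first formula of \eqref{singlepath} to identify the $(i,j)$ entry of the matrix in \eqref{DetFormula} with $|\Path(A_i \to E_j)|$, one expands the determinant as
\[
\sum_{\sigma \in S_d} \sgn(\sigma)\prod_{r=1}^d |\Path(A_{\sigma(r)} \to E_r)|,
\]
a signed count over pairs $(\sigma, (L_1, \ldots, L_d))$ with $L_r : A_{\sigma(r)} \to E_r$. On the intersecting tuples I would construct the standard sign-reversing involution: pick the smallest $r$ for which $L_r$ meets another path, the smallest such $s$, the lex-first point $P \in L_r \cap L_s$, and swap the tails of $L_r, L_s$ past $P$. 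This alters $\sigma$ by the transposition $(r\,s)$ and flips the sign, while keeping the tuple intersecting. The hypotheses of (i) force any nonintersecting tuple to have $\sigma = \mathrm{id}$ (otherwise an inversion among endpoints produces a forced crossing), so the determinant equals $|\Path(\mathcal{A} \to \mathcal{E})|$.

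For part (ii), I would refine the argument to track NE-turns. A lattice path from $A=(a,a')$ to $E=(e,e')$ with $k$ NE-turns is uniquely determined by two independent strictly increasing sequences: the $x$-coordinates $x_1 < \cdots < x_k$ in $\{a, \ldots, e-1\}$ and the $y$-coordinates $y_1 < \cdots < y_k$ in $\{a'+1, \ldots, e'\}$ of its NE-turns; this is how the second formula of \eqref{singlepath} is obtained. For a nonintersecting $d$-tuple with turn distribution $(k_1, \ldots, k_d)$, the encoding produces two families of $d$ strictly increasing integer sequences, and the nonintersection of the original paths translates to interleaving conditions on these sequences. The plan is to construct a sign-reversing involution directly on the turn-encodings rather than on the underlying lattice paths (the standard tail-swap can create or destroy NE-turns at the intersection point, and so fails to preserve the total turn count). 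This refined involution exchanges suitably chosen segments of the encoding sequences while preserving each individual turn-count $k_i$; its fixed-point set consists precisely of the nonintersecting configurations corresponding to $\sigma = \mathrm{id}$.

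The main obstacle is the bookkeeping needed to translate the hypotheses of (ii) into the strict inequalities required to run this refined involution, which is also what ultimately produces the shifts $\pm(i - j)$ in the binomial entries of \eqref{DetFormulaWithTurns}. Under the reparametrization $\tilde{A}_i = (a_i - i,\, a_i' + i)$ and $\tilde{E}_j = (e_j - j,\, e_j' + j)$, the weak inequalities for $a_i$ and $e_i'$ (together with the given strict inequalities for $a_i'$ and $e_i$) become strict inequalities in every coordinate of the shifted points, allowing the LGV comparison to proceed. A careful count of the resulting encoded ``paths'' from $\tilde{A}_i$ to $\tilde{E}_j$ with $k_i$ NE-turns then yields exactly $\binom{e_j - a_i + i - j}{k_i + i - j}\binom{e_j' - a_i' - i + j}{k_i}$, the $(i,j)$-entry of \eqref{DetFormulaWithTurns}. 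Summing the resulting signed count over compositions $k_1 + \cdots + k_d = k$ produces the stated formula. The combinatorial verification that the involution is sign-reversing under the shifted ordering, together with the identification of the binomial entries with the correct shifts, is where the bulk of the work lies.
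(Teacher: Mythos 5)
The paper itself gives no proof of this proposition: part (i) is attributed to Gessel--Viennot and part (ii) to Modak, Krattenthaler and Kulkarni, with the statement following Krattenthaler's formulation, and the reader is referred to those sources. So your attempt is a genuine proof sketch where the paper has only a citation. Your part (i) is the standard Lindstr\"om--Gessel--Viennot argument (determinant expansion, tail-swap involution on intersecting families, and the observation that under the stated orderings only $\sigma=\mathrm{id}$ can yield a nonintersecting family), and that outline is essentially correct and matches how the cited references prove it.

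Part (ii), however, has a genuine gap. First, the one concrete identification you make is false: the $(i,j)$ entry of \eqref{DetFormulaWithTurns} is $\binom{e_j-a_i+i-j}{k_i+i-j}\binom{e'_j-a'_i-i+j}{k_i}$, whereas the number of lattice paths from your shifted point $\tilde{A}_i=(a_i-i,\,a'_i+i)$ to $\tilde{E}_j=(e_j-j,\,e'_j+j)$ with $k_i$ NE-turns is, by \eqref{singlepath}, $\binom{e_j-a_i+i-j}{k_i}\binom{e'_j-a'_i-i+j}{k_i}$ --- the same lower index in both factors. The shift $i-j$ sitting in the \emph{lower} index of the first binomial cannot be produced by translating endpoints; it forces one to interpret the entries as counting pairs of strictly increasing sequences of \emph{different} lengths $k_i+i-j$ and $k_i$ (two-rowed arrays whose row lengths differ by $i-j$), which is exactly the device on which Krattenthaler's proof runs. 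Second, the heart of the argument --- the sign-reversing involution on these turn encodings --- is only announced, not constructed; you correctly note that the naive tail-swap destroys the turn count, but the replacement involution (which in the actual proof operates on the two-rowed arrays and needs only to preserve the total number of turns, not each individual $k_i$ as you stipulate, since the formula sums over all compositions $k_1+\cdots+k_d=k$) is precisely where the difficulty lies, and it is missing. As written, part (ii) is a plan whose decisive step is left open and whose stated entry interpretation would not even reproduce the off-diagonal entries of the determinant.
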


Part (i) of the above proposition is due to Gessel and Viennot \cite[Thm. 1]{GV} although some of the ideas can be traced back to
Chaundy  \cite{Chaundy}, Karlin and McGregor \cite{KM}, and Lindstr\"om \cite{Lind}. The statement here is a little more general than that of \cite{GV} and a proof can be found, for example, in \cite[\S \, 3]{Hodgenote} or \cite[\S \, 2.2]{Krat2}. Part (ii) was proved independently by Modak \cite{M}, Krattenthaler \cite{Krat} and Kulkarni \cite{Ku} (see also \cite{JSPI}), although the hypothesis in \cite{M} and \cite{Ku} on the coordinates of the initial and the end points  is slightly more restrictive than in (ii) above where we follow \cite[Thm. 1]{Krat}.
The following consequence 
is frequently 
used in Section~\ref{sec:Hilb}.

\begin{corollary}
\label{corpath}
For any $a,b,c,d,s\in \Z$ with $a<c$ and $b\ge d$, 
the cardinality of $\Path_s\left((1,2)\to (a,b), \; (1,1)\to (c,d)\right)$ is given by
$$
\sum_{s_1+s_2=s} {\binom{a-1}{s_1}}{\binom{b-2}{s_1}}{\binom{c-1}{s_2}}{\binom{d-1}{s_2}}
- {\binom{a}{s_2+1}}{\binom{b-2}{s_2}}{\binom{c-2}{s_1-1}}{\binom{d-1}{s_1}}.
$$
\end{corollary}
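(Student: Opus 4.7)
The plan is to simply invoke part (ii) of Proposition~\ref{GVFormula} with $d=2$, $A_1=(1,2)$, $A_2=(1,1)$, $E_1=(a,b)$, $E_2=(c,d)$ (so $a_1=a_2=1$, $a_1'=2$, $a_2'=1$, $e_1=a$, $e_2=c$, $e_1'=b$, $e_2'=d$), and then expand the resulting $2\times 2$ determinant term by term.

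First I would verify that the monotonicity hypotheses of Proposition~\ref{GVFormula}(ii) are satisfied. The conditions $a_1\le a_2$ and $a_1' > a_2'$ hold trivially since $1\le 1$ and $2>1$. The remaining conditions $e_1<e_2$ and $e_1'\ge e_2'$ are precisely the assumptions $a<c$ and $b\ge d$ in the statement of the corollary. Hence Proposition~\ref{GVFormula}(ii) applies and gives
\[
\left|\Path_s\bigl((1,2)\to (a,b),\,(1,1)\to (c,d)\bigr)\right|
= \sum_{k_1+k_2=s} \det \begin{pmatrix} M_{11} & M_{12} \\ M_{21} & M_{22}\end{pmatrix},
\]
where $M_{ij} = \binom{e_j-a_i+i-j}{k_i+i-j}\binom{e_j'-a_i'-i+j}{k_i}$.

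Next I would compute each $M_{ij}$ by direct substitution:
\[
M_{11}=\binom{a-1}{k_1}\binom{b-2}{k_1}, \quad
M_{22}=\binom{c-1}{k_2}\binom{d-1}{k_2},
\]
\[
M_{12}=\binom{c-2}{k_1-1}\binom{d-1}{k_1},\quad
M_{21}=\binom{a}{k_2+1}\binom{b-2}{k_2}.
\]
Expanding $M_{11}M_{22}-M_{12}M_{21}$, summing over $k_1+k_2=s$, and renaming $(k_1,k_2)$ as $(s_1,s_2)$ then yields exactly the formula in the corollary.

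There is no real obstacle here: the proof is essentially a matter of checking that the four monotonicity inequalities in the hypothesis of Proposition~\ref{GVFormula}(ii) are met and then carrying out a short bookkeeping calculation to match the indices in the binomial coefficients. The only place where one needs to be careful is the sign and the index shifts in the off-diagonal entries $M_{12}$ and $M_{21}$, since the shift $i-j$ is $-1$ for $(i,j)=(1,2)$ and $+1$ for $(i,j)=(2,1)$; this is precisely what produces the $\binom{c-2}{s_1-1}$ and $\binom{a}{s_2+1}$ appearing in the second summand.
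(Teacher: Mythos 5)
Your proposal is correct and is exactly the paper's argument: the paper's proof is the one-line observation that the corollary is the special case $d=2$ of Proposition~\ref{GVFormula}(ii) with $A_1=(1,2)$, $A_2=(1,1)$, $E_1=(a,b)$, $E_2=(c,d)$, and your verification of the hypotheses and of the four matrix entries $M_{ij}$ (including the index shifts giving $\binom{c-2}{s_1-1}$ and $\binom{a}{s_2+1}$) simply writes out the bookkeeping the paper leaves implicit.
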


\begin{proof}
This is just a special case of part (ii) of Proposition~\ref{GVFormula}.
\end{proof}

\section{Multiplicity}
\label{sec:mult}

As in the Introduction, we fix in the remainder of this paper an algebraically closed field $\F$ and integers $m,n$ with $2<m\le n$.
Also let $x_{i,j}, \, y_{i,j}$, where $1\le i \le m$, $1\le j\le n$, be independent indeterminates over $\F$. Denote by $V_x$ the set
$\left\{x_{i,j} : \mbox{$1\le i \le m$ and $1\le j\le n$}\right\}$ of the ``$x$-variables'', and by $V_y$ a similar set of
of the ``$y$-variables''. 
Let $V = V_x \cup V_y$ and let $R=\F[V]$
be the corresponding polynomial ring in $2mn$ variables; 
also, let $R_x =\F[V_x]$ and $R_y=\F[V_y]$ 
be the corresponding polynomial rings 
in $mn$ variables. By the \emph{support} of a monomial $F$ in $R$, 
denoted $\supp(F)$, we mean the subset of $V$ 
consisting of the variables appearing in $F$. Clearly a monomial $F$ in $R$ can be uniquely written~as
\begin{equation}
\label{FxFy}
F = F_xF_y \quad \text{where $F_x,F_y$ are monomials with } F_x\in R_x \text{ and } F_y \in R_y
\end{equation}
and moreover, $F$ is squarefree if and only if both $F_x$ and $F_y$ are squarefree. Note that squarefree monomials can be identified with their supports, and in particular, faces of a simplicial complex $\Delta$ with vertex set $V$ can be viewed as squarefree monomials in $R$. 
With this in view, we may not distinguish between a squarefree monomial and its support, and we may sometimes write $x_{i,j}\in G$ rather than $x_{i,j}\mid G$ when $G$ is a squarefree monomial in $R$ and $x_{i,j}$ is a variable appearing in it. 
A monomial $G$ in $R_x$ 
will be called a \emph{lattice path monomial} in $R_x$ if there is a positive integer $t$ and variables $x_{i_1,j_1}, \dots , x_{i_t,j_t}$
in $V_x$ such that
\begin{equation}
\label{lpm}
G = \prod_{s=1}^t x_{i_s,j_s} \quad \text{with} \quad \left(i_s - i_{s-1}, \; j_s - j_{s-1} \right) = (1,0) \text{ or } (0,1) \text{ for } 1< s\le t.
\end{equation}
In this case $G$ is called a lattice path monomial from $x_{i_1,j_1}$ to $x_{i_t,j_t}$, and we will refer to $x_{i_1,j_1}$ as the \emph{leader} of $G$ and denote it by $\mu(G)$. Note that $\mu(G) = x_{i_1,j_1}$ depends only on $G$ (and not on the given ordering of the variables appearing in it) since $(i_1, j_1)$ is lexicographically the least among the pairs $(i,j)$ for which $x_{i,j}\in \supp(G)$.  
A variable $x_{i_s,j_s}$ in $\supp(G)$ will be called an \emph{ES-turn} of $G$ if $1<s<t$, $i_s = i_{s-1}$, and $j_s = j_{s+1}$. Analgously, a variable $x_{i_s,j_s}$ in $\supp(G)$ will be called a \emph{SE-turn} of $G$ if $1<s<t$, $j_s = j_{s-1}$, and $i_s = i_{s+1}$.
Moreover, we will call a variable $x_{i_s,j_s}$ in $\supp(G)$ the \emph{midpoint of a segment} in $G$ if $1<s<t$ and either $i_{s-1} = i_s = i_{s+1}$ (horizontal segment) or $j_{s-1} = j_s = j_{s+1}$ (vertical segment).  
It may be noted that a variable $x_{i_s,j_s}$ with $1<s<t$ is either an ES-turn or a SE-turn or the midpoint of a segment in $G$. 

Evidently, lattice path monomials in $R_x$ correspond to lattice paths in the sense of \S \ref{latpaths} if we turn the $m\times n$ rectangular matrix $\left(x_{i,j}\right)$ to the left by $90^{\circ}$ and identify the variable $x_{i,j}$ with the lattice point $(i,j)$. In this way leaders correspond to initial points while ES-turns correspond to NE-turns.
Lattice path monomials in $R_y$ together with their leaders, ES-turns, SE-turns, and midpoints of segments are similarly defined (and similarly identified with lattice paths in the sense of \S \ref{latpaths}).

We have noted in the Introduction that a Gr\"obner basis (w.r.t. reverse lexicographic order on monomials with the $2mn$ variables arranged suitably) of the ideal $\I$ of the variety $\ztwoone$ of first-order jets over $\ztwo$ as well as of
the ideal $\Inot$ of the principal component $Z_0$ of $\ztwoone$ was determined by Ko\v{s}ir and Sethuraman \cite{KoSe2}. As a consequence, one can write down the generators of the leading term ideal of $\Inot$ (cf. \cite[Prop. 1.1]{boyan}), say $\LT(\Inot)$, and deduce that
$R/\LT(\Inot)$ is the Stanley-Reisner ring of a simplicial complex $\Dnot$ with $V$ as its set of vertices. A precise description of the facets of $\Dnot$ was 
given by Jonov \cite[\S \, 2]{boyan} and we recall it below.

\begin{proposition}
\label{facets} 
A squarefree monomial $F$, decomposed as in \eqref{FxFy} above, is a facet of $\Dnot$ if and only if there is a unique $(i,j)\in \Z^2$ with $1\le i\le m$, $1\le j\le n$ such that $(i,j)\ne (m,n)$ and $F_x$ is a lattice path monomial from $x_{i,j}$ to $x_{m,n}$, whereas 
$F_y= F_y^{\mathsf{U}}F_y^{\mathsf{L}}$ where
$F_y^{\mathsf{U}}$ is a lattice path monomial from $y_{1,1}$ to $y_{i,n}$,
$F_y^{\mathsf{L}}$ is a lattice path monomial from $y_{2,1}$ to $y_{m,j}$, and the supports of $F_y^{\mathsf{U}}$ and $F_y^{\mathsf{L}}$ are disjoint.
\end{proposition}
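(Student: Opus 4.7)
The plan is to read off the facets of $\Dnot$ directly from an explicit description of the minimal generators of $\LT(\Inot)$. The Gr\"obner basis of $\Inot$ with respect to the reverse lexicographic order (with the $2mn$ variables arranged as in \cite{KoSe2}) is already known, so the initial ideal $\LT(\Inot)$ is a squarefree monomial ideal whose finitely many minimal generators are written down in \cite[Prop. 1.1]{boyan}. A squarefree monomial $F\in R$ is a face of $\Dnot$ precisely when no element of this list divides $F$, and hence the facets of $\Dnot$ are the maximal such $F$. Decomposing $F=F_xF_y$ as in \eqref{FxFy}, the generators of $\LT(\Inot)$ split into three blocks: (a) anti-diagonal products $x_{i,l}x_{k,j}$ with $i<k$ and $j<l$, coming from the $2\times 2$ minors of $X=(x_{i,j})$; (b) $y$-only monomials that encode the additional generators distinguishing the principal component $Z_0$ inside $\ztwoone$; and (c) mixed leading monomials coupling $x$-variables to $y$-variables in a $2\times 2$ pattern.

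First I would use block (a) to force $\supp(F_x)$ to be a totally ordered subset of the $m\times n$ grid under the componentwise order; equivalently, when listed lexicographically, consecutive variables in $\supp(F_x)$ differ by $(1,0)$ or $(0,1)$, which is exactly the lattice path monomial condition \eqref{lpm}. Thus $F_x$ is a lattice path monomial from some leader $x_{i,j}$ to some terminal variable $x_{i',j'}$. Maximality, together with the fact that no generator of $\LT(\Inot)$ has $x_{m,n}$ in its support, pushes the terminal point to $x_{m,n}$, and the exclusion $(i,j)\ne(m,n)$ in the statement rules out the single-variable monomial $F_x=x_{m,n}$; that degenerate choice corresponds to facets supported entirely in the other component $Z_1\cong\Aff^{mn}$ and so cannot yield a maximal face of $\Dnot$.

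Next I would use blocks (b) and (c) to pin down $F_y$. The mixed generators in (c) force the following rule: whenever $x_{k,\ell}\in \supp(F_x)$, no $y_{p,q}$ with $(p,q)$ in the forbidden quadrant relative to $(k,\ell)$ can appear in $\supp(F_y)$. As $(k,\ell)$ ranges along the $F_x$-path, these prohibitions cut the admissible positions for $y$-variables into an \emph{upper} region lying weakly north-east of the $F_x$-path and a \emph{lower} region lying weakly south-west of it, with the path itself forbidden. The $y$-only generators in (b) then rule out anti-diagonals within each region, so $\supp(F_y)$ restricted to the upper (resp.\ lower) region is itself a lattice path monomial $F_y^{\mathsf U}$ (resp.\ $F_y^{\mathsf L}$). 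Imposing maximality, $F_y^{\mathsf U}$ must begin at $y_{1,1}$ and end at the furthest admissible north-east corner, which is $y_{i,n}$, while $F_y^{\mathsf L}$ must begin at $y_{2,1}$ and end at $y_{m,j}$; disjointness of the supports is automatic since any common variable would have to lie on the dividing $F_x$-path, contradicting a mixed relation from~(c). Uniqueness of $(i,j)$ is then clear because the leader $\mu(F_x)$ is determined by $F_x$ and hence by $F$.

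The main obstacle I expect is the combinatorial bookkeeping in the paragraph above: matching the precise shape of the generators in blocks (b) and (c) to the clean ``upper plus lower lattice path'' picture, and in particular identifying $y_{i,n}$ and $y_{m,j}$ as the forced end points of the two $y$-paths determined by the leader $(i,j)$. A careful case analysis of which mixed and $y$-only leading terms become tight at a candidate facet is needed to establish both directions: (i) every $F$ of the stated shape avoids all generators of $\LT(\Inot)$ and is maximal with this property, and (ii) conversely, no squarefree monomial outside $\LT(\Inot)$ of a different shape can be maximal. This is precisely the enumeration carried out in \cite[\S 2]{boyan}.
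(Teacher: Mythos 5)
The paper does not prove Proposition~\ref{facets} at all: it is recalled from Jonov \cite[\S\,2]{boyan}, and your proposal, which explicitly leaves the decisive case analysis to that same reference, is in substance the same treatment, so as far as method goes both you and the paper reduce the statement to the citation. Your added value is a plausible roadmap for how the facet description should fall out of the minimal generators of $\LT(\Inot)$ listed in \cite[Prop.~1.1]{boyan} --- the $x$-only antidiagonal leading terms do force $\supp(F_x)$ to be a chain, hence a subset of a lattice path --- but the roadmap is not a proof, and the parts of the proposition that carry real content (the coupling of the endpoints $y_{i,n}$ and $y_{m,j}$ to the leader $x_{i,j}$, and the nonintersection of $F_y^{\mathsf{U}}$ and $F_y^{\mathsf{L}}$) are exactly the parts you defer to \cite[\S\,2]{boyan}.

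Two of the justifications you do offer would not survive scrutiny if the sketch were meant to stand alone. First, excluding $F_x=x_{m,n}$ has nothing to do with the component $Z_1$: $\Dnot$ is the initial complex of $\Inot$, the ideal of $Z_0$ alone, so no face of $\Dnot$ is ``supported in $Z_1$''; the only legitimate reason is that a face whose $x$-part is the single variable $x_{m,n}$ is never maximal, and showing that is part of the same maximality analysis you are postponing. Second, the claim that disjointness of $F_y^{\mathsf{U}}$ and $F_y^{\mathsf{L}}$ is ``automatic since any common variable would have to lie on the dividing $F_x$-path'' conflates the $x$- and $y$-grids: the two $y$-paths live among the $y$-variables and $F_x$ among the $x$-variables, so a common $y$-variable cannot lie on $F_x$; the nonintersection must come from the $y$-only (and possibly mixed) leading terms, whose precise shape your blocks (b) and (c) never pin down. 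Since the whole ``upper region / lower region'' picture rests on that unspecified shape, the proposal contains no independent verification beyond what the citation already supplies --- which, to be fair, is also all the paper itself offers for this proposition.
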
 

The lattice path monomials $F_x$ and $F_y= F_y^{\mathsf{U}}F_y^{\mathsf{L}}$ 
are illustrated in Figure~\ref{FigFxFy} by the corresponding ``paths'' in rectangular matrices.

\begin{figure}[t]
\centering
\includegraphics[scale=0.22]{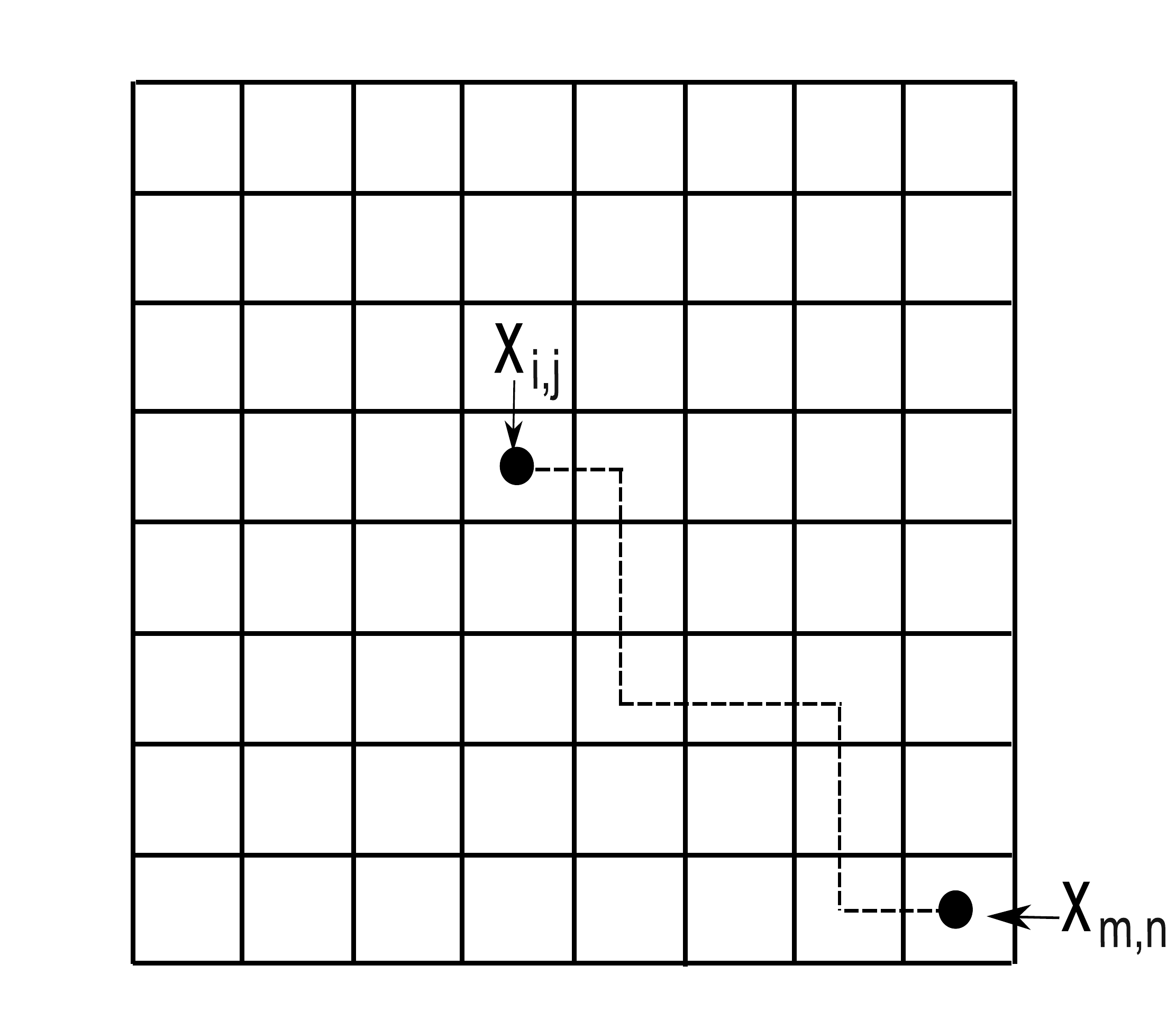}
\qquad
\includegraphics[scale=0.22]{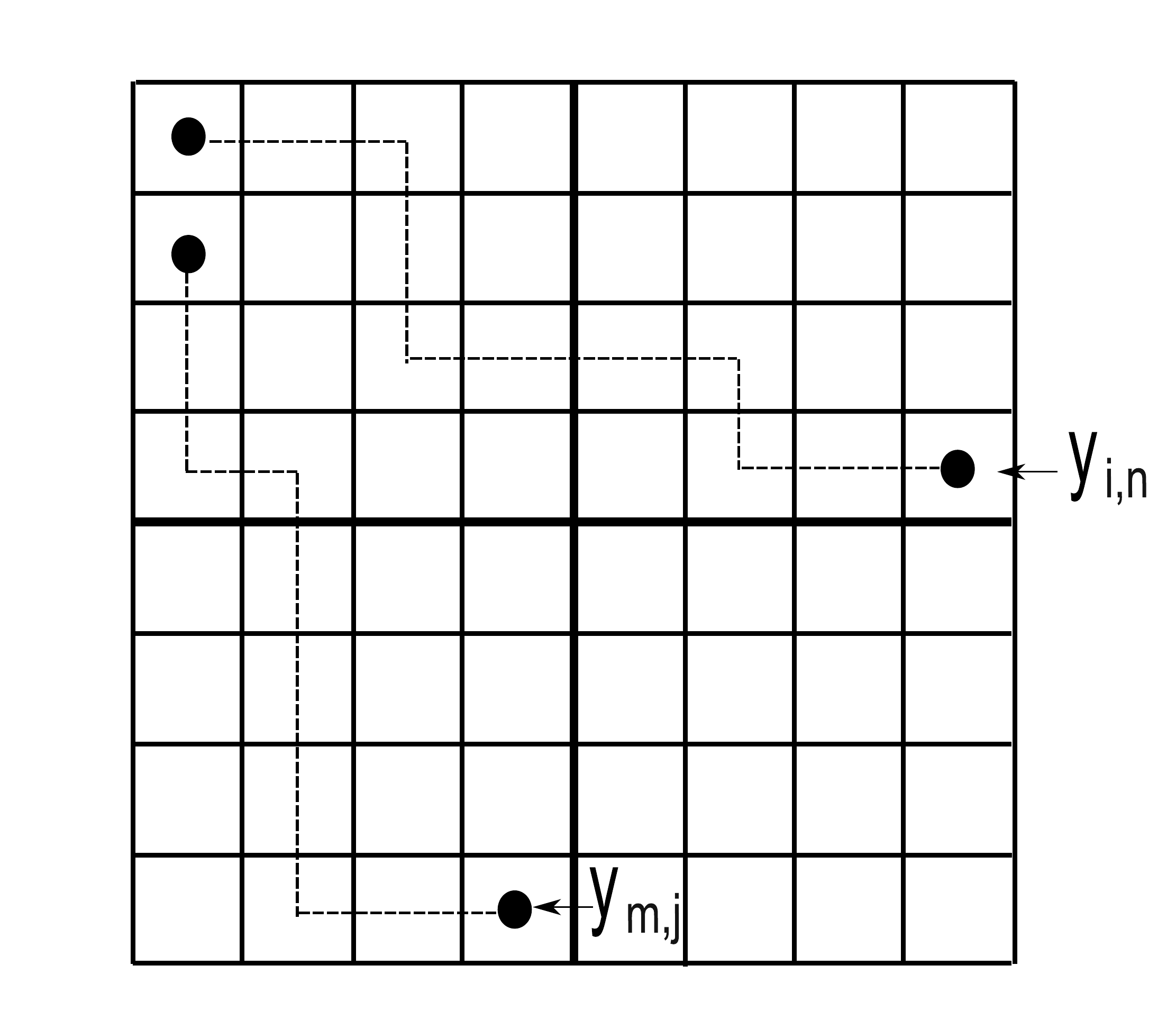}
\caption{Lattice path monomials $F_x$ and $F_y= F_y^{\mathsf{U}}F_y^{\mathsf{L}}$ in Proposition~\ref{facets}}
\label{FigFxFy}
\end{figure}

Using Proposition~\ref{facets} together with the first identity in \eqref{singlepath} and part (i) of Proposition~\ref{GVFormula}, Jonov showed that the simplicial complex $\Dnot$ is pure (i.e., all its facets have the same dimension) and deduced the 
dimension and the formula stated in the Introduction for the multiplicity of the coordinate ring $R/\Inot$ of $Z_0$.

\begin{corollary}
\label{cor:jonov}
The (Krull) dimension of $R/\Inot$ is $2(m+n-1)$ and the multiplicity of $R/\Inot$ is given by \eqref{mult}.
\end{corollary}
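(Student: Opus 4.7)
The plan is to translate the computation of both invariants of $R/\Inot$ into a facet count on $\Dnot$, and then apply the path-counting formulas from Section~\ref{sec2} to the explicit facet description in Proposition~\ref{facets}. Since passing to the leading term ideal is a flat Gröbner deformation, $R/\Inot$ and $R/\LT(\Inot)$ share the same Hilbert function; in particular, they have the same Krull dimension and the same multiplicity. Thus it suffices to carry out the computation for $R/\LT(\Inot)$, which is the Stanley-Reisner ring $\F[\Dnot]$. Recall the standard fact that for a pure simplicial complex $\Delta$ whose facets all have cardinality $d$ (as subsets of the vertex set), the Stanley-Reisner ring $\F[\Delta]$ has Krull dimension $d$ and its multiplicity equals the number of facets of $\Delta$.

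First I would establish purity and compute the common facet size. Fix a facet $F = F_x F_y^{\mathsf{U}} F_y^{\mathsf{L}}$ associated (via Proposition~\ref{facets}) to a pair $(i,j) \ne (m,n)$. The supports are three lattice paths, and their sizes are easily read off: $|\supp(F_x)| = (m-i)+(n-j)+1$, $|\supp(F_y^{\mathsf{U}})| = (i-1)+(n-1)+1 = i+n-1$, and $|\supp(F_y^{\mathsf{L}})| = (m-2)+(j-1)+1 = m+j-2$. Because $\supp(F_y^{\mathsf{U}})$ and $\supp(F_y^{\mathsf{L}})$ are required to be disjoint, and the $x$- and $y$-variables are automatically disjoint, summing gives
\[
|\supp(F)| = (m-i+n-j+1) + (i+n-1) + (m+j-2) = 2(m+n-1),
\]
independent of $(i,j)$. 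So $\Dnot$ is pure of that cardinality, which yields the claim $\di R/\Inot = 2(m+n-1)$.

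Next I would count the facets. For a fixed admissible pair $(i,j)$, the number of lattice path monomials $F_x$ from $x_{i,j}$ to $x_{m,n}$ equals $\binom{m+n-i-j}{m-i}$ by the first identity in \eqref{singlepath} (applied to the path in $R_x$ after the $90^{\circ}$ rotation identification of \S\ref{sec2}). Independently, the pair $(F_y^{\mathsf{U}}, F_y^{\mathsf{L}})$ corresponds to a nonintersecting pair of lattice paths from $\mathcal{A} = ((1,1),(2,1))$ to $\mathcal{E} = ((i,n),(m,j))$. The hypothesis of Proposition~\ref{GVFormula}(i) is satisfied ($1 \le 2$, $i \le m$, $1 \ge 1$, $n \ge j$), so the number of such pairs equals the $2\times 2$ determinant
\[
\det \begin{pmatrix} \binom{(i-1)+(n-1)}{i-1} & \binom{(m-1)+(j-1)}{m-1} \\ \binom{(i-2)+(n-1)}{i-2} & \binom{(m-2)+(j-1)}{m-2} \end{pmatrix} = \det \begin{pmatrix} \binom{i+n-2}{i-1} & \binom{m+j-2}{m-1} \\ \binom{i+n-3}{i-2} & \binom{m+j-3}{m-2} \end{pmatrix}.
\]
Multiplying and summing over all $(i,j) \in \{1,\dots,m\}\times\{1,\dots,n\}$ with $(i,j)\ne (m,n)$ recovers exactly \eqref{mult}.

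There is no serious obstacle: the work is essentially bookkeeping once Proposition~\ref{facets} is in hand. The only subtle points are verifying that the hypotheses of Proposition~\ref{GVFormula}(i) hold for the $y$-side (which requires $i\le m$ and $n\ge j$, both automatic) and being careful that the cases $i=1$ (so that $F_y^{\mathsf{U}}$ degenerates to a single vertex) and $j=1$ (so that $F_y^{\mathsf{L}}$ may degenerate) are still handled uniformly by the binomial formulas via the conventions in \eqref{binomzero}.
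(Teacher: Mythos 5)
Your argument is correct and takes essentially the same route as the paper, which attributes precisely this proof to Jonov: purity of $\Dnot$ via the facet description in Proposition~\ref{facets}, then counting facets over each leader $(i,j)$ using the first identity in \eqref{singlepath} for $F_x$ and part (i) of Proposition~\ref{GVFormula} for the nonintersecting pair $\left(F_y^{\mathsf{U}},F_y^{\mathsf{L}}\right)$. (One cosmetic slip: for $i=1$ or $j=1$ the paths $F_y^{\mathsf{U}}$ or $F_y^{\mathsf{L}}$ reduce to a single row or column rather than a single vertex, but your binomial formulas handle these cases uniformly in any event.)
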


Now here is the 
pretty result about the multiplicity that was alluded to in the Introduction. Namely, that the 
formula \eqref{mult} admits a remarkable simplification.

\begin{theorem}
\label{easymult}
The multiplicity of $R/\Inot$ is given by
\begin{equation}
\label{easymultformula}
e(R/\Inot) = {\binom{m+n-2}{m-1}}^2.
\end{equation}
\end{theorem}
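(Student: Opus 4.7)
The plan is to start from Jonov's formula \eqref{mult}, evaluate the inner sum over $i$ via the Chu-Vandermonde identity \eqref{Chu2}, and then recognize the remaining sum over $j$ as a telescoping sum.

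First, expand the $2\times 2$ determinant in \eqref{mult} as $\binom{i+n-2}{i-1}\binom{m+j-3}{m-2}-\binom{m+j-2}{m-1}\binom{i+n-3}{i-2}$, and observe that the full summand vanishes at $(i,j)=(m,n)$, so the restriction $(i,j)\ne(m,n)$ may be dropped without changing the value. For fixed $j$, the summand is zero outside $1\le i\le m$, so the inner sums may be regarded as summations over all $i\in\Z$. After a change of variable in $i$, both inner sums fall into the shape required for \eqref{Chu2} (with parameters involving $c=m-1$ in the first case and $c=m-2$ in the second), and they evaluate respectively to $\binom{2n+m-j-1}{m-1}$ and $\binom{2n+m-j-2}{m-2}$. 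The problem is thereby reduced to the identity
$$\sum_{j=1}^n \left[\binom{2n+m-j-1}{m-1}\binom{m+j-3}{m-2} - \binom{m+j-2}{m-1}\binom{2n+m-j-2}{m-2}\right] = \binom{m+n-2}{m-1}^2.$$

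The main obstacle is this last identity, and the plan is to recognize it as telescoping. After the cosmetic substitution $k=j-1$, $M=m-1$, $N=n-1$, I would introduce the function
$$h(k) := \binom{M+k}{M}\binom{M+2N-k}{M}$$
and claim that the $k$-th summand, namely $\binom{2N+M+1-k}{M}\binom{M+k-1}{M-1}-\binom{M+k}{M}\binom{2N+M-k}{M-1}$, equals $h(k)-h(k-1)$. The verification is a pair of applications of Pascal's rule \eqref{Pascal}: use $\binom{M+k-1}{M-1}=\binom{M+k}{M}-\binom{M+k-1}{M}$ to pull out a common factor of $\binom{M+k}{M}$ from the first term, then collapse $\binom{2N+M+1-k}{M}-\binom{2N+M-k}{M-1}$ into $\binom{2N+M-k}{M}$. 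After these two substitutions the telescoping form drops out directly.

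The sum then collapses to $h(N)-h(-1)$. At the upper boundary, $h(N)=\binom{M+N}{M}^2=\binom{m+n-2}{m-1}^2$, which is exactly the desired answer. At the lower boundary, $h(-1)=\binom{M-1}{M}\binom{2N+M+1}{M}=0$ by \eqref{binomzero}, since $0\le M-1<M$ (valid because $m>2$ gives $M\ge 2$). The real difficulty is spotting the antiderivative $h(k)$; once $h$ is written down, the Pascal computations are mechanical.
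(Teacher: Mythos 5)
Your proposal is correct and follows essentially the same route as the paper: drop the vanishing $(m,n)$ term, collapse one index of the double sum via the Chu--Vandermonde identity \eqref{Chu2}, and recognize the remaining single sum as telescoping. The only difference is that you collapse the sum over $i$ and telescope over $j$ (the paper does the reverse), which costs you two applications of Pascal's rule to exhibit the antiderivative $h(k)$, whereas in the paper's order the telescoping is a pure index shift; both computations check out.
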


\begin{proof}
For $1\le i\le m$ and $1\le j \le n$, let $\Delta_{i,j}$ denote the $2\times 2$ determinant in \eqref{mult}. Observe that if $(i,j)=(m,n)$, then $\Delta_{i,j}=0$. Thus, by expanding this determinant and rearranging the summands, we can write
$$
e(R/\Inot) = \sum_{i=1}^m {\binom{i+n-2}{i-1}} \sum_{j=1}^n S_{i,j} 
-  \sum_{i=1}^m {\binom{i+n-3}{i-2}} \sum_{j=1}^n T_{i,j}, 
$$
where for $1\le i\le m$ and $1\le j \le n$, we have put
$$
S_{i,j} = {\binom{m+n-i-j}{m-i}} {\binom{m+j-3}{m-2}} \text{ and }
T_{i,j} = {\binom{m+n-i-j}{m-i}} {\binom{m+j-2}{m-1}} .
$$
Rewriting $S_{i,j}$ using \eqref{VminusA} and then noting that the resulting product is zero if $j<1$ or $j>n$, thanks to \eqref{binomzero},
we see from equation \eqref{Chu2} in Lemma \ref{Chu} that
$$
\sum_{j=1}^n S_{i,j} = \sum_{j} {\binom{m+n-i-j}{n-j} }{\binom{m+j-3}{j-1}} = {\binom{2m+n-i-2}{n-1} }
$$
for each $i=1, \dots, m$. In a similar manner,
$$
\sum_{j=1}^n T_{i,j} = \sum_{j} {\binom{m+n-i-j}{n-j} }{\binom{m+j-2}{j-1}} = {\binom{2m+n-i-1}{n-1} }
$$
for each $i=1, \dots, m$.
It follows that $e(R/\Inot)$ is given by the telescoping sum
$$
e(R/\Inot) = \sum_{i=1}^m (a_i - a_{i-1}) \; \text{ where } a_i:= {\binom{i+n-2}{i-1}}{\binom{2m+n-i-2}{n-1} } \ 
(0\le i\le m).
$$
Since $a_0=0$ and $a_m = {\binom{m+n-2}{m-1}}^2$, we obtain the desired result.
\end{proof}

It may be noted that in view of \eqref{detmult} and \eqref{easymultformula}, the multiplicity of the principal component $Z_0$
is precisely the square of the multiplicity of the base variety $\ztwo$.

\section{Hilbert series}
\label{sec:Hilb}

Let us begin by recalling that a \emph{shelling} of a pure simplicial complex $\Delta$ is a linear ordering $F_1, \dots , F_e$
of its facets such that for all positive integers $i,j$ with $j < i \le e$, there exist some $v\in F_i \setminus F_j$ and some
positive integer $k<i$ such that $F_i\setminus F_k = \{v\}$. Given such a shelling and any $t\in \{1, \dots , e\}$, we let
$$
c(F_t) = \left\{v\in F_t: \text{ there exists $s<t$ such that } F_t\setminus F_s = \{v\}\right\}.
$$
Elements of $c(F_t)$ will be referred to as the \emph{corners} of $F_t$. 	
It may be noted that $c(F_t)$ is nonempty if and only if $t>1$.
Recall also that a simplicial complex $\Delta$ is said to be \emph{shellable} if
it is pure and it has a shelling. The following result is well-known (cf. 
\cite[Thm. 6.3]{BrCo}).

\begin{proposition}
\label{shellableHilb}
Let $\Delta$ be a shellable simplicial complex and let $R_{\Delta}$ denote its Stanley-Reisner ring. Then
\begin{enumerate}
	\item[{\rm (i)}] 
	$R_{\Delta}$ is Cohen-Macaulay and its (Krull) dimension $\dim R_{\Delta}$ is $1+ \dim {\Delta}$.
	\item[{\rm (ii)}] Suppose $d=\dim R_{\Delta}$ and $F_1, \dots , F_e$ is a shelling of $\Delta$. 
	Then the Hilbert series of $R_{\Delta}$ is given by
	$$
	\frac{\sum_{j\ge 0} h_j z^j}{(1-z)^d} \quad \text{where} \quad
	h_j = \left| \left\{t\in\{1, \dots , e\} : \left| c(F_t)\right| = j \right\} \right|\text{ for } j\ge 0.
	$$	
\end{enumerate}
\end{proposition}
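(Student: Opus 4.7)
The plan is to prove both parts simultaneously by establishing the following key partition lemma: for the given shelling $F_1, \dots, F_e$, the face poset of $\Delta$ decomposes as a disjoint union
$$
\Delta = \bigsqcup_{t=1}^{e} [c(F_t), F_t], \qquad \text{where } [c(F_t), F_t] := \{G \in \Delta : c(F_t) \subseteq G \subseteq F_t\}.
$$
This combinatorial decomposition underlies both the Hilbert series formula and, via standard Stanley–Reisner theory, the Cohen--Macaulay conclusion.

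To prove the partition lemma, let $G \in \Delta$ and let $t$ be the \emph{least} index with $G \subseteq F_t$. I would verify $c(F_t) \subseteq G$ as follows: if $v \in c(F_t)$, then by definition there is some $s<t$ with $F_t \setminus F_s = \{v\}$, i.e.\ $F_t = (F_t \cap F_s) \cup \{v\}$. Were $v \notin G$, we would have $G \subseteq F_t \setminus \{v\} \subseteq F_s$, contradicting minimality of $t$. Hence $G \in [c(F_t), F_t]$. Uniqueness is immediate: if $G$ were also in $[c(F_s), F_s]$ for some $s < t$, then $G \subseteq F_s$ would again contradict minimality. Granting the partition, I compute the Hilbert series via the standard Stanley--Reisner formula
$$
H(R_\Delta, z) = \sum_{F \in \Delta} \left(\frac{z}{1-z}\right)^{|F|},
$$
which follows from the fact that monomials supported on faces $F$ form an $\F$-basis of $R_\Delta$. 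Breaking the sum according to the partition and setting $u = z/(1-z)$, the contribution of the interval $[c(F_t), F_t]$ is $u^{|c(F_t)|}(1+u)^{|F_t|-|c(F_t)|}$. Purity of $\Delta$ (together with $\dim \Delta = d-1$, to be justified below) gives $|F_t| = d$ for every $t$, and since $1+u = 1/(1-z)$ and $u^{c} = z^{c}/(1-z)^{c}$, the contribution simplifies to $z^{|c(F_t)|}/(1-z)^d$. Summing over $t$ and grouping facets by $|c(F_t)|$ yields exactly $\sum_j h_j z^j/(1-z)^d$, proving (ii).

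For (i), the dimension statement is standard: the Krull dimension of any Stanley--Reisner ring equals one more than the dimension of the complex, so $\dim R_\Delta = 1 + \dim \Delta$. The real content of (i) is the Cohen--Macaulay property. The main obstacle here is that shellability alone is purely combinatorial, and converting this into the existence of a regular sequence (or verifying Reisner's homological criterion) requires a separate argument. The cleanest route is to invoke Reisner's criterion and show that for every face $F \in \Delta$, the link $\mathrm{lk}_\Delta(F)$ inherits a shelling from the given one (by restricting to facets containing $F$, in the induced order), so that by induction on the number of vertices $\mathrm{lk}_\Delta(F)$ has the homotopy type of a wedge of spheres of top dimension and in particular vanishing reduced homology below that dimension; then Reisner's criterion gives Cohen--Macaulayness. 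Since this part of the argument is classical and well-documented, I would simply cite \cite[Thm.~6.3]{BrCo} rather than reproduce the details, which is precisely what the paper does.
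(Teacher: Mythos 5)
Your overall route for (ii) -- partition the face poset into the intervals $[c(F_t),F_t]$ and sum the face generating function $\sum_{F\in\Delta}\bigl(z/(1-z)\bigr)^{|F|}$ over each interval -- is the standard and correct one; note that the paper itself offers no proof but simply cites \cite[Thm. 6.3]{BrCo}, so giving the classical argument (and citing for the Cohen--Macaulay part of (i), as you do) is entirely in the spirit of the text. The generating-function computation, the use of purity to get $|F_t|=d$, and the dimension statement in (i) are all fine.

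There is, however, a genuine gap in your partition lemma, and it sits exactly where shellability has to enter. Your covering step (take $t$ minimal with $G\subseteq F_t$ and deduce $c(F_t)\subseteq G$) uses only the definition of $c(F_t)$, and your ``uniqueness is immediate'' step only excludes $G\in[c(F_s),F_s]$ for $s<t$; nothing in your argument rules out $G\in[c(F_u),F_u]$ for some $u>t$, and indeed the shelling hypothesis is never invoked anywhere in your proof of the lemma. The statement you are implicitly proving -- disjointness of the intervals for an arbitrary ordering of the facets of a pure complex -- is false: for the (non-shelling) order $F_1=\{1,2\}$, $F_2=\{3,4\}$, $F_3=\{2,3\}$ one has $c(F_1)=c(F_2)=\emptyset$, so the face $\emptyset$ lies in both $[c(F_1),F_1]$ and $[c(F_2),F_2]$, and the resulting count would give $h_0=2$. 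The missing step is short: if $c(F_u)\subseteq G\subseteq F_u$ and also $G\subseteq F_t$ with $t<u$, apply the shelling condition to the pair $u>t$ to obtain $v\in F_u\setminus F_t$ and $k<u$ with $F_u\setminus F_k=\{v\}$; then $v\in c(F_u)\subseteq G\subseteq F_t$, contradicting $v\notin F_t$. With that insertion the partition lemma, and hence your proof of (ii), is complete. (Your sketch for (i), that links inherit shellings, is likewise left unverified, but since you ultimately fall back on \cite{BrCo} -- as the paper does -- that is acceptable.)
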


In \cite{boyan}, Jonov showed that the simplicial complex $\Dnot$ mentioned in the previous section is shellable and concluded using part (i) of  Proposition~\ref{shellableHilb} that that the coordinate ring of $R/\Inot$ of the principal component $Z_0$ of $\ztwoone$ is Cohen-Macaulay. We shall now proceed to use part (ii) of Proposition \ref{shellableHilb} to determine the Hilbert series of $R/\Inot$.
We will use the notation and terminology introduced at the beginning of Section \ref{sec:mult}. Further, we introduce the following ``anti-lexicographic'' linear order on $V_x$, 
i.e., on the $x$-variables. For any $x_{a,b}, x_{c,d} \in V_x$, define
$$
x_{a,b} \prec x_{c,d} \Longleftrightarrow \text{ either} \quad a > c \quad \text{or} \quad a=c \, \text{ and } \, b > d.
$$
Given a lattice path monomial $G$ as in \eqref{lpm}, the \emph{spread} of $G$, denoted $\spread(G)$,  is the set of variables that are on or below the corresponding lattice path; more precisely,
$$
\spread(G) = \left\{x_{a,b}: i_s \le a \le m \text{ and } 1\le b \le j_s \text{ for some } s=1, \dots , t\right\}.
$$
The notion of spread is defined for lattice path monomials in $R_y$ in exactly the same manner. It may be observed that if $G,H$ are lattice path monomials (both in $R_x$ or both in $R_y$), then the condition $\spread(G) \subseteq \spread (H)$ means, roughly speaking, that $H$ is to the right of $G$; moreover, if 
$\mu(G)=\mu(H)$ and  $\spread(G) = \spread (H)$, then we must have $G=H$.

Notice that the lattice path monomials $F_y^{\mathsf{U}}$ and $F_y^{\mathsf{L}}$ of Proposition \ref{facets} have the property that  $\spread(F_y^{\mathsf{L}}) \subseteq \spread(F_y^{\mathsf{U}})$. 


Following \cite{boyan}, we now define a partial order on the facets of $\Dnot$. 

\begin{definition}
\label{partord}
{\rm
For any facets $P, Q$ of $\Dnot$ with decompositions $P=P_xP_y^{\mathsf{U}}P_y^{\mathsf{L}}$
and $Q=Q_xQ_y^{\mathsf{U}}Q_y^{\mathsf{L}}$ as in Proposition \ref{facets}, define $P < Q$ if 
one of the following four 
conditions hold:
(i) $\mu(P_x)\prec \mu (Q_x)$,
(ii)  $\mu(P_x) = \mu (Q_x)$ and $\spread(P_x) \subsetneq \spread(Q_x)$, (iii) $P_x=Q_x$ and 
$\spread(P_y^{\mathsf{U}}) \subsetneq \spread(Q_y^{\mathsf{U}})$,
(iv) $P_x=Q_x$, $P_y^{\mathsf{U}}=Q_y^{\mathsf{U}}$, and $\spread(P_y^{\mathsf{L}}) \subsetneq \spread(Q_y^{\mathsf{L}})$. 
}
\end{definition}

The next result is a consequence of \cite[Thm. 3.2]{boyan} and its proof.

\begin{proposition}
\label{shelling}
The relation $<$ in Definition \ref{partord} defines a partial order and any extension of it to a total order on the facets of $\Dnot$ gives a shelling of $\Dnot$.
\end{proposition}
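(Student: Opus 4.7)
The plan is to address the two claims of Proposition \ref{shelling} — that $<$ is a partial order and that every linear extension of it shells $\Dnot$ — leaning heavily on the analysis already carried out in \cite[Thm. 3.2]{boyan}.

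\textbf{Partial order.} Irreflexivity is immediate since each of the clauses (i)--(iv) uses a strict relation ($\prec$ or $\subsetneq$). For antisymmetry and transitivity, I would encode each facet $F$ of $\Dnot$ (with decomposition as in Proposition \ref{facets}) by the tuple $\tau(F) = \bigl(\mu(F_x),\, \spread(F_x),\, \spread(F_y^{\mathsf{U}}),\, \spread(F_y^{\mathsf{L}})\bigr)$, and observe that $P<Q$ is precisely lexicographic domination of $\tau(P)$ by $\tau(Q)$, where the first coordinate is compared via the total order $\prec$ on $V_x$ and the remaining three via strict containment. Since both $\prec$ and $\subsetneq$ are strict partial orders, the lexicographic combination is too; a routine case analysis on which of the four clauses witnesses $P<Q$ and $Q<R$ then yields $P<R$.

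\textbf{Shelling.} The statement to extract from Jonov's argument is the following \emph{one-step} assertion: for every pair of facets $P,Q$ of $\Dnot$ with $P<Q$, there exists a facet $R$ with $R<Q$ such that $Q\setminus R=\{v\}$ for a single vertex $v\in Q\setminus P$. Once this is in hand, any linear extension $F_1,\dots,F_e$ of $<$ is a shelling: given $j<i$, apply the assertion to $P=F_j$ and $Q=F_i$ to obtain $R$; since $R<F_i$ in the partial order, the linear extension places $R$ before $F_i$, so $R=F_k$ for some $k<i$ with $F_i\setminus F_k=\{v\}$ and $v\in F_i\setminus F_j$, exactly as required by the shelling condition.

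The construction of $R$ from $Q$ is by explicit surgery on the data of Proposition \ref{facets}, according to the lowest-indexed clause in Definition \ref{partord} that places $P$ below $Q$. For clause (iv) one picks the ``outermost'' cell of $\spread(Q_y^{\mathsf{L}})\setminus\spread(P_y^{\mathsf{L}})$ and retracts $Q_y^{\mathsf{L}}$ past it, leaving $Q_x$ and $Q_y^{\mathsf{U}}$ untouched; clauses (i)--(iii) are handled by analogous single-vertex modifications of $\mu(F_x)$, $\spread(F_x)$, and $\spread(F_y^{\mathsf{U}})$, respectively. Each surgery must be verified to remove exactly one variable from the facet while producing a valid facet — this is essentially the inductive step inside the proof of \cite[Thm. 3.2]{boyan}, which one re-reads with the observation that its ``corner-removal'' operation depends only on $Q$ and the clause witnessing $P<Q$, not on the particular total order Jonov fixes. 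The main obstacle is the book-keeping in clauses (iii) and (iv): one must ensure that after shrinking one of $F_y^{\mathsf{U}}, F_y^{\mathsf{L}}$, the resulting pair still has disjoint supports and remains a pair of lattice path monomials with the correct initial and end points, so that Proposition \ref{facets} still applies to identify the modified squarefree monomial as a facet of $\Dnot$.
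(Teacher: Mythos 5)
Your treatment of the partial order is fine: $P<Q$ is exactly lexicographic comparison of the tuples $\bigl(\mu(F_x),\,\spread(F_x),\,\spread(F_y^{\mathsf{U}}),\,\spread(F_y^{\mathsf{L}})\bigr)$, where a tie in a coordinate forces equality of the corresponding component (a lattice path monomial is determined by its leader and its spread), so the relation is a strict partial order. Note that the paper itself offers no argument for this proposition beyond citing \cite[Thm. 3.2]{boyan} and its proof, so the real content lies in whether your reduction of the shelling claim is sound.

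It is not, as stated: you formulate the one-step assertion only for pairs with $P<Q$ in the partial order, and then apply it to $P=F_j$, $Q=F_i$ for arbitrary $j<i$ in a linear extension. But $F_j$ preceding $F_i$ in a linear extension only guarantees $F_i\not<F_j$; since $<$ is genuinely partial --- for instance, two facets with the same $\mu(F_x)$ whose $x$-paths cross have incomparable $\spread(F_x)$, and likewise for facets with identical $F_x$ whose $F_y^{\mathsf{U}}$-paths cross --- the pair $(F_j,F_i)$ may be incomparable. In that case no clause of Definition \ref{partord} ``places $P$ below $Q$'', so the surgery you prescribe (keyed to ``the lowest-indexed clause witnessing $P<Q$'') has nothing to act on, and the shelling condition for that pair is simply not verified. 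What must actually be proved, and extracted from Jonov's argument, is the stronger statement: for every facet $Q$ and every facet $P\ne Q$ with $Q\not<P$ (i.e.\ every $P$ that can precede $Q$ in some extension), there exist $v\in Q\setminus P$ and a facet $R$ with $R<Q$ in the partial order and $Q\setminus R=\{v\}$. The choice of $v$ and $R$ must then be driven by how $Q$ differs from $P$ (an ES-turn or the leader of $Q$ lying outside $P$), not by a comparability clause; and one must check that the chosen vertex is genuinely removable, avoiding the exceptional turns isolated later in Lemma \ref{corners}(iv) (namely $y_{1,2}$, and $y_{m-1,j+1}$ when $\mu(F_x)=x_{m,j}$), which shows that an ``outermost cell'' recipe cannot be taken for granted. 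As written, your argument establishes the shelling condition only for those pairs $j<i$ with $F_j<F_i$, which is not enough.
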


The terminology of ES-turns can be extended from lattice path monomials to facets of $\Dnot$ as follows.
For  any facet $F$ of $\Dnot$ having a decomposition
$F=F_xF_y^{\mathsf{U}}F_y^{\mathsf{l}}$ as in Proposition \ref{facets}, by an \emph{ES-turn} of $F$ we shall mean an ES-turn of either $F_x$ or $F_y^{\mathsf{L}}$ or $F_y^{\mathsf{U}}$.
It turns out that the corners of a facet of $\Dnot$ are essentially its ES-turns or the leader of its $x$-component. There are, however, some subtleties involved and a precise relation is given below.

\begin{lemma} 
\label{corners}
Let $F$ be a facet of $\Dnot$ and $F=F_xF_y^{\mathsf{U}}F_y^{\mathsf{L}}$ be its decomposition as in  Proposition \ref{facets}. 
Also let $v\in V$ be a vertex of $\Dnot$.
Then we have the following.
\begin{enumerate} 
	\item[{\rm (i)}] If $v\in c(F)$, then  either $v = \mu(F_x)$ or $v$ is an ES-turn of $F$.
	In particular, $x_{m,n}\not\in c(F)$ and $ y_{m,n}\not\in c(F)$.
	\item[{\rm (ii)}] If $\mu(F_x) = x_{i,j}$ with $(i,j)\ne (m,n-1)$, then $\mu(F_x) \in c(F)$. Moreover, $x_{m,n-1}\not\in c(F)$.
	\item[{\rm (iii)}] If $v$ is an ES-turn of $F_x$, then $v\in c(F)$.
	\item[{\rm (iv)}] If $v$ is an ES-turn of $F_y^{\mathsf{U}}$ or of $F_y^{\mathsf{L}}$, then $v\in c(F)$, except when $v$ is an ES-turn of $F_y^{\mathsf{U}}$ such that  $v = y_{1,2}$
 %
 %
 or when $v$ is an ES-turn of $F_y^{\mathsf{U}}$ such that $v=y_{m-1, j+1}$ and $\mu(F_x) = x_{m,j}$ for some 
	$j<n$. 
\end{enumerate}
\end{lemma}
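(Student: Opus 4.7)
The plan is to analyze, for each vertex $v\in F$ with decomposition $F=F_xF_y^{\mathsf{U}}F_y^{\mathsf{L}}$, which single-vertex swaps $F\mapsto F'=(F\setminus\{v\})\cup\{v'\}$ produce another facet of $\Dnot$ and whether the resulting $F'$ satisfies $F'<F$ in the partial order of Definition~\ref{partord}. Since $\Dnot$ is pure (all facets have size $2(m+n-1)$), one has $v\in c(F)$ precisely when such an $F'$ exists. I would classify each $v$ by its role in the decomposition: leader, endpoint, ES-turn, SE-turn, or midpoint of exactly one of the three lattice-path components.

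For part~(i), I rule out the non-leader, non-ES-turn types. A midpoint of a straight segment cannot be swapped within its component (the unique lattice point at distance $1$ from both of its path-neighbours is itself), and no cross-component swap can restore the disconnected path. Swapping an SE-turn at $(a,b)$ for its companion vertex $(a-1,b+1)$ produces an ES-turn that shifts the path up-right and strictly \emph{enlarges} the spread of the component, yielding $F'>F$. The forced leaders $y_{1,1},y_{2,1}$ and the endpoints $x_{m,n},y_{i,n},y_{m,j}$ are tied to the datum $(i,j)=\mu(F_x)$, so removing any of them forces a simultaneous change in a second vertex and rules out a single-vertex swap; in particular $x_{m,n}\notin c(F)$ and $y_{m,n}\notin c(F)$ whenever it occurs in $F$.

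For the positive assertions in (ii)--(iv), the basic move is the \emph{SE-ification} of an ES-turn $(a,b)$, replacing it by $(a+1,b-1)$: the new path has the same endpoints and strictly smaller spread, so $F'<F$ via one of (ii)--(iv) of the order. For an ES-turn of $F_x$ the swap is automatic (the $y$-variables are untouched), proving (iii). For an ES-turn of $F_y^{\mathsf{L}}$, the containment $\spread(F_y^{\mathsf{L}})\subseteq\spread(F_y^{\mathsf{U}})$ together with disjointness forces $F_y^{\mathsf{U}}$ to occupy rows strictly less than $F_y^{\mathsf{L}}$ at column $b-1$, so $y_{a+1,b-1}\notin F_y^{\mathsf{U}}$ and the swap succeeds. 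For an ES-turn of $F_y^{\mathsf{U}}$, $y_{a+1,b-1}$ may lie in $F_y^{\mathsf{L}}$; but disjointness with $F_y^{\mathsf{U}}$ at $(a,b-1)$ and $(a+1,b)$ rules out the midpoint and SE-turn positions, forcing $(a+1,b-1)$ to be an ES-turn or a boundary point of $F_y^{\mathsf{L}}$. When it is an interior ES-turn, the \emph{simultaneous} SE-ification of both paths (at $(a,b)$ and $(a+1,b-1)$) produces a net single-vertex change $y_{a,b}\leftrightarrow y_{a+2,b-2}$ in $F_y$, still strictly shrinking $\spread(F_y^{\mathsf{U}})$, so $F'<F$. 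For the leader $\mu(F_x)=x_{i,j}$ in part~(ii), the swap $x_{i,j}\leftrightarrow y_{m,j+1}$ (when $F_x$ begins horizontally) or $x_{i,j}\leftrightarrow y_{i+1,n}$ (when $F_x$ begins vertically) extends the corresponding $y$-component by one step and yields a $\prec$-smaller leader, hence $F'<F$ by (i) of the order.

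The remaining task is to identify the obstructions. The simultaneous swap for an ES-turn of $F_y^{\mathsf{U}}$ is blocked precisely when $(a+1,b-1)$ is the fixed leader $y_{2,1}$ or the fixed endpoint $y_{m,j}$ of $F_y^{\mathsf{L}}$; these force $(a,b)=(1,2)$ and $(a,b)=(m-1,j+1)$ with $\mu(F_x)=x_{m,j}$ and $j<n$, giving the two exceptions in (iv), and in each case one exhausts the handful of remaining single-vertex swaps to verify that none decreases the order. The leader swap for part~(ii) fails only in the degenerate cases $|F_x|=2$, namely $(i,j)\in\{(m-1,n),(m,n-1)\}$. In the case $(i,j)=(m-1,n)$, the reassembly swap $x_{m-1,n}\leftrightarrow x_{m,n-1}$, combined with transferring $y_{m,n}$ from $F_y^{\mathsf{L}}$ into $F_y^{\mathsf{U}}$ (valid because disjointness forces the penultimate vertex of $F_y^{\mathsf{L}}$ to be $(m,n-1)$), produces a valid $F'<F$ via case~(i) of the order. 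In the case $(i,j)=(m,n-1)$ the analogous exchange enlarges the leader in $\prec$, and a direct check of all other one-vertex swaps shows $\mu(F_x)\notin c(F)$; this also immediately yields $x_{m,n-1}\notin c(F)$, since whenever $x_{m,n-1}$ is not the leader it appears in $F_x$ as a midpoint or SE-turn (its step into $x_{m,n}$ being horizontal precludes an ES-turn), and part~(i) applies. The main obstacle will be the bookkeeping for the two exceptional ES-turns of $F_y^{\mathsf{U}}$, where one must rule out every alternative re-routing of the two $y$-paths that might conceivably decrease the partial order.
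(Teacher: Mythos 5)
Your overall strategy is the same as the paper's: realize each claimed corner by an explicit one-vertex swap $F\mapsto P$ with $P<F$ in the partial order of Definition~\ref{partord}, and rule out such swaps for the exceptional vertices. However, there is a genuine gap in your treatment of part~(ii). You assert that the generic leader swap $x_{i,j}\leftrightarrow y_{m,j+1}$ (horizontal first step) or $x_{i,j}\leftrightarrow y_{i+1,n}$ (vertical first step) ``fails only in the degenerate cases $|F_x|=2$.'' This is false: the swap also fails whenever the target $y$-vertex already lies on one of the $y$-paths, namely when $i=m$ and $y_{m,j+1}\in F_y^{\mathsf{U}}$ (which occurs for arbitrary $j<n-1$, e.g.\ whenever $F_y^{\mathsf{U}}$ descends to row $m$ at column $j+1$), and when $j=n$, $i\le m-2$ and $y_{i+1,n}\in F_y^{\mathsf{L}}$. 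In these configurations your proposed $P$ would simply lose a vertex and is not a facet, and one needs a compensating re-routing of the occupied $y$-path so that the net change is still a single vertex (in the paper: replace $y_{m,j+1}$ in $F_y^{\mathsf{U}}$ by $y_{m-1,j+2}$, resp.\ $y_{i+1,n}$ in $F_y^{\mathsf{L}}$ by $y_{i+2,n-1}$). Since these are precisely the cases with the leader in the last row or column, where the content of (ii) (including the threshold at $(m,n-1)$) lives, part~(ii) is not proved by your argument as written.

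Two further points. The negative assertions --- $x_{m,n-1}\notin c(F)$ when $\mu(F_x)=x_{m,n-1}$, and the two exceptions $y_{1,2}$ and $y_{m-1,j+1}$ in (iv) --- are exactly where the paper does concrete work (for instance, any facet $P$ with $F\setminus P=\{y_{1,2}\}$ would force $P_y^{\mathsf{U}}$ through $y_{2,1}$, contradicting disjointness from $P_y^{\mathsf{L}}$; for $v=y_{m-1,l}$ one shows $P_y^{\mathsf{U}}$ would be forced through $y_{m,l-1}\in P_y^{\mathsf{L}}$), whereas you defer them to ``exhausting the remaining single-vertex swaps,'' so these parts are asserted rather than proved. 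Also, in part~(i) your claim that removing an endpoint such as $y_{i,n}$ admits no single-vertex swap at all is not literally correct: deleting $y_{i,n}$ while prolonging $F_x$ to a new leader $x_{i-1,j}$ can yield a facet; the conclusion survives only because any such swap makes the leader larger in $\prec$, hence produces a facet that is greater, not smaller, in the partial order --- an argument you would need to supply (the paper avoids the issue by deducing part~(i) directly from the four clauses of Definition~\ref{partord}).
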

\begin{proof}
(i) Let $P=P_xP_y^{\mathsf{U}}P_y^{\mathsf{L}}$ be a facet of  $\Dnot$ such that $F\setminus P = \{v\}$ and $F > P$. 
The latter implies that one of the four possibilities in Definition~\ref{partord} must arise. 
First, suppose $\mu(P_x)\prec \mu (F_x)$. Then $\mu(F_x)$ is a vertex of $F$ that is smaller than $\mu(P_x)$ in the standard lexicographic order, and hence $\mu(F_x)\not\in P_x$;  
consequently, $v = \mu(F_x)$, and we are done. Now suppose $\mu(P_x) = \mu (F_x)$ and $\spread(P_x) \subsetneq \spread(F_x)$. Then $P_x\ne F_x$ and hence $F_x\setminus P_x = \{v\}$. Note that since $\mu (F_x)$ and $x_{m,n}$ are in $P_x$, the vertex $v$ is an ES-turn, SE-turn, or the midpoint of a segment of $F_x$. In case it is the midpoint of a segment of $F_x$, the other two vertices in that segment must be in $P_x$ and since $P_x$ is a lattice path monomial, we see that $v\in P_x$, which is a contradiction. Also if $v = x_{k,l}$ (say) is a SE-turn of $F_x$, then $x_{k-1,l}$ and $x_{k,l+1}$ must be in $F_x$ and hence in $P_x$. But then $P_x$ must contain $x_{k-1,l+1}$, which is a contradiction since $x_{k-1,l+1}\not \in \spread(F_x)$. It follows that $v$ is an ES-turn of $F_x$. Next, suppose $P_x=F_x$ and $\spread(P_y^{\mathsf{U}}) \subsetneq \spread(F_y^{\mathsf{U}})$. Then $F_y^{\mathsf{U}} \setminus P_y^{\mathsf{U}} =\{v\}$. Since $\mu(P_x) = \mu (F_x)$, in view of Proposition~\ref{facets}, we see that the initial and the terminal variables of $P_y^{\mathsf{U}}$ and $F_y^{\mathsf{U}}$ coincide and so $v$ is neither of these. Arguing as in the preceding case, we can rule out the possibilities that $v$ is a SE-turn or the midpoint of a segment of $F_y^{\mathsf{U}}$. Hence $v$ is an ES-turn of $F_y^{\mathsf{U}}$. In a similar manner, we see that if $P_x=F_x$, 
$P_y^{\mathsf{U}} = F_y^{\mathsf{U}}$, and 
$\spread(P_y^{\mathsf{L}}) \subsetneq \spread(F_y^{\mathsf{L}})$, then $v$ is a ES-turn of $F_y^{\mathsf{L}}$. Thus (i) is proved.  

\medskip
  
(ii) Let $\mu(F_x) = x_{i,j}$ with $(i,j)\ne (m,n-1)$. Then either $x_{i,j+1} \in F_x$ or $x_{i+1,j} \in F_x$. First, suppose $x_{i,j+1} \in F_x$. 
We define a new facet $P$ as follows. Let 
 $P_x = F_x \setminus \{x_{i,j}\}$ and  $ P_y^{\mathsf{L}} = F_y^{\mathsf{L}}\cup \{y_{m,j+1}\}$.  To define $P_y^{\mathsf{U}}$, we take $P_y^{\mathsf{U}} = F_y^{\mathsf{U}}$ in the case $y_{m,j+1} \notin F_y^{\mathsf{U}}$.  If $y_{m,j+1} \in F_y^{\mathsf{U}}$, then this must mean that $i=m$, and 
 hence $j< n-1$. We therefore define  $P_y^{\mathsf{U}} = (F_y^{\mathsf{U}} \setminus y_{m,j+1})y_{m-1,j+2}$.  Observe that 
 $P=P_x P_y^{\mathsf{U}}P_y^{\mathsf{L}}$ is a facet of $\Delta_0$ and since $\mu(P_x) \prec \mu(F_x)$, we have $P < F$. It follows that $\mu(F_x)\in c(F)$.  
Next, suppose  $x_{i+1,j} \in F_x$. We first assume that $(i,j) \neq (m-1,n)$. Now define a new facet $P$ as follows.  First, we let $P_x = F_x \setminus \{x_{i,j}\}$.  If $y_{i+1,n} \notin F_y^{\mathsf{L}}$, then we let  $ P_y^{\mathsf{U}} = F_y^{\mathsf{U}} \cup \{y_{i+1,n}\}$ and $P_y^{\mathsf{L}} = F_y^{\mathsf{L}}$.  If $y_{i+1,n} \in F_y^{\mathsf{L}}$, then $j$ must equal $n$.  If now $i\le m-2$, then we let  $P_y^{\mathsf{L}} = F_y^{\mathsf{L}} \setminus \{y_{i+1,n}\} \cup \{y_{i+2,n-1}\}$. We are left with  the special case $i = m-1$, $j = n$. Here we let $P_x = \{x_{m,n-1},\ x_{m,n}\}$,  $P_y^{\mathsf{U}} = F_y^{\mathsf{U}} \cup \{y_{m,n}\}$, and $P_y^{\mathsf{L}} = F_y^{\mathsf{L}} \setminus \{y_{m,n}\}$. In all three cases, it is easy to verify that $P=P_x P_y^{\mathsf{U}}P_y^{\mathsf{L}}$ is a facet of $\Delta_0$ such that $F \setminus P = \{x_{i,j}\}$ and $P < F$. Consequently, $\mu(F_x)\in c(F)$.  
Finally, we show that $x_{m,n-1} \notin c(F)$. Assume, on the contrary, that there is a facet $P$  of $\Dnot$ such that 
$F \setminus P = \{x_{m,n-1}\}$.  By (i) above, $\mu(F) = x_{m,n-1}$, because there can be no ES-turn at $x_{m,n-1}$.  
In view of Proposition~\ref{facets}, 
$P$ must contain at least one variable other than $x_{m,n}$,  and since $x_{m,n-1}\not\in P$, it follows that $x_{m-1, n} \in P$.  This forces
$\mu(F_x)  \prec \mu(P_x)$, which violates the fact that $P < F$. Thus (ii) is proved. 

\medskip

(iii) Let $v = x_{k,l}$ be an ES-turn of $F_x$. Define  $P_x = F_x \setminus \{x_{k,l}\} \cup \{x_{k+1,l-1}\}$ and $P_y = F_y$. 
It is clear that $P=P_xP_y$ is a facet of $\Delta_0$ such that $P < F$ and $F \setminus P = \{v\}$. This proves (iii). 

\medskip

(iv) 
First, suppose 
$v = y_{k,l}$ is an ES-turn of $F_y^{\mathsf{L}}$. Then $k<m$ and $l>1$. Define $P_x = F_x$, $P_y^{\mathsf{U}} = F_y^{\mathsf{U}}$, and $P_y^{\mathsf{L}} = F_y^{\mathsf{L}} \setminus \{y_{k,l} \} \cup \{y_{k+1,l-1}\}$.  It is easy to see that $P= P_xP_y^{\mathsf{U}}P_y^{\mathsf{L}}$ is facet of $\Delta_0$ such that $P < F$ and $F\setminus P = \{v\}$.  Next, suppose $v = y_{k,l}$ is an ES-turn of $F_y^{\mathsf{U}}$. 
Then once again $k<m$ and $l>1$. 
In case $y_{k+1,l-1}$ is not in $F_y^{\mathsf{L}}$, we define 
$P_x = F_x$, $P_y^{\mathsf{L}} = F_y^{\mathsf{L}}$, and $P_y^{\mathsf{U}} = F_y^{\mathsf{U}} \setminus \{y_{k,l}\} \cup \{y_{k+1,l-1}\}$, whereas 
in case   $y_{k+1,l-1}$ is  in $F_y^{\mathsf{L}}$ and also $k < m-1$
%
and $l > 2$, we define 
$P_x = F_x$, $P_y^{\mathsf{U}} = F_y^{\mathsf{U}} \setminus \{y_{k,l}\} \cup \{y_{k+1,l-1}\}$, and 
$P_y^{\mathsf{L}} = F_y^{\mathsf{L}} \setminus \{ y_{k+1,l-1}\} \cup \{y_{k+2,l-2}\}$. 
We verify that in both the cases, $P=P_x P_y^{\mathsf{U}}P_y^{\mathsf{L}}$ is a facet of $\Delta_0$ such that  $P < F$ and $F\setminus P = \{v\}$.

When $l = 2$, it is easy to see that  $v = y_{k,2}$ can be an ES-turn of
$F_y^{\mathsf{U}}$ only when $k=1$ lest $F_y^{\mathsf{U}}$ and $F_y^{\mathsf{L}}$ intersect at $y_{k,1}$.  
We now show that $y_{1,2}$ is not a corner of $F$.   Suppose that $P=P_xP_y^{\mathsf{U}}P_y^{\mathsf{L}}$ is a facet of  $\Dnot$ such that $F\setminus P = \{v\}$, $v = y_{1,2}$ and $F > P$.  By Proposition~\ref{facets}, $P_y^{\mathsf{U}}$ must start at $y_{1,1}$ and $P_y^{\mathsf{L}}$ must start at $y_{2,1}$. For $P_y^{\mathsf{U}}$ to avoid $v = y_{1,2}$, it must be the case that $P_y^{\mathsf{U}}$ contains $y_{2,1}$. But this contradicts the fact that $P_y^{\mathsf{U}}$ and $P_y^{\mathsf{L}}$ do not intersect.

We are left with the situation where $k = m-1$ and $v= y_{k,l}$ is an ES-turn of $F_y^{\mathsf{U}}$ 
and moreover, $y_{m, l-1} \in F_y^{\mathsf{L}}$.  Now since $F_y^{\mathsf{U}}$ has an ES-turn at $y_{m-1, l}$, we see that $l>1$ and both 
$y_{m-1, l-1}$ and $y_{m, l}$ are in $F_y^{\mathsf{U}}$. In particular, $y_{m, l}\not\in F_y^{\mathsf{L}}$ and since  $y_{m, l-1} \in F_y^{\mathsf{L}}$, in view of Proposition~\ref{facets}, it follows that $F_y^{\mathsf{L}}$ ends at $y_{m,l-1}$, while $F_y^{\mathsf{U}}$ ends at 
$y_{m,n}$ and also that $\mu(F_x) = x_{m,l-1}$. Now if there were a facet $P=P_xP_y^{\mathsf{U}}P_y^{\mathsf{L}}$  of  $\Dnot$ such that $F\setminus P = \{v\}$ and $F > P$, then $P_x=F_x$ and $P_y^{\mathsf{L}} = F_y^{\mathsf{L}}$, whereas $F_y^{\mathsf{U}} \setminus P_y^{\mathsf{U}} = \{y_{m-1, l}\}$. But then $P_y^{\mathsf{U}}$ is a lattice path monomial that contains both $y_{m-1, l-1}$ and $y_{m, l}$ and does not contain 
$y_{m-1, l}$; so it must contain $y_{m, l-1}$. This is a contradiction since $y_{m, l-1} \in F_y^{\mathsf{L}} = P_y^{\mathsf{L}}$ and the 
monomials $P_y^{\mathsf{U}}$ and $P_y^{\mathsf{L}}$ have no variable in common. This completes the proof. 
\end{proof}

For any integers $i,j,k$ with $k\ge 0$, $1\le i \le m$ and $1\le j\le n$, we define
$C_{i,j}^k $ to be the number of facets $F=F_xF_y$ of $\Dnot$ such that $\mu(F_x)= x_{i,j}$ and $F$ has exactly $k$ ES-turns that are in $c(F)$.
Now a useful consequence of Lemma \ref{corners} is the following.

\begin{corollary}
\label{hkCij}
The Hilbert series of the coordinate ring $R/\Inot$ of the principal component $Z_0$ of $\ztwoone$ is given by
\begin{equation}
\label{hilbgen}
\frac{\sum_{k\ge 0} h_k z^k}{(1-z)^{2(m+n-1)}}, 
\end{equation}
where $h_0=1$ and for $k\ge 1$,
\begin{equation}
\label{hkeqn}
h_k = C_{m,n-1}^k + \mathop{\sum_{(i,j)\ne (m,n-1)}}_{(i,j)\ne (m,n)} C_{i,j}^{k-1},
\end{equation}
where the last sum is over all pairs $(i,j)$ of integers satisfying $1\le i \le m$ and $1\le j\le n$ with $(i,j)\ne (m,n-1)$ and $(i,j)\ne (m,n)$.
\end{corollary}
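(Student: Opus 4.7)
The plan is to assemble the corollary as a direct bookkeeping consequence of Jonov's shellability together with Lemma~\ref{corners}. First, since $\LT(\Inot)$ arises from a Gr\"obner deformation of $\Inot$, the Hilbert series of $R/\Inot$ equals that of $R/\LT(\Inot) = R_{\Dnot}$. By Proposition~\ref{shelling}, $\Dnot$ is shellable, and by Corollary~\ref{cor:jonov}, $\dim R/\Inot = 2(m+n-1)$. Applying Proposition~\ref{shellableHilb}(ii), the Hilbert series takes the form $\sum_{k\ge 0} h_k z^k / (1-z)^{2(m+n-1)}$, where
\[
h_k = \left| \left\{F \text{ a facet of } \Dnot : \left| c(F)\right| = k \right\} \right| .
\]
So the task reduces to counting facets by the number of their corners.

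Next I would unpack $|c(F)|$ using Lemma~\ref{corners}. Fix a facet $F = F_xF_y^{\mathsf{U}}F_y^{\mathsf{L}}$ and write $k'(F)$ for the number of ES-turns of $F$ that lie in $c(F)$. Part (i) of the lemma tells us every corner of $F$ is either $\mu(F_x)$ or an ES-turn of $F$; combined with parts (iii) and (iv), this means $c(F)$ consists of $k'(F)$ many ES-turns together with possibly $\mu(F_x)$. Part (ii) governs exactly when $\mu(F_x)$ belongs to $c(F)$: it does so if and only if $\mu(F_x) \ne x_{m,n-1}$, and moreover $x_{m,n-1}\notin c(F)$ in any case. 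Consequently,
\[
|c(F)| = \begin{cases} k'(F) + 1 & \text{if } \mu(F_x) \ne x_{m,n-1}, \\ k'(F) & \text{if } \mu(F_x) = x_{m,n-1}. \end{cases}
\]

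Now I would partition the facets according to the value $\mu(F_x) = x_{i,j}$. By Proposition~\ref{facets}, the admissible pairs $(i,j)$ are precisely those with $1\le i \le m$, $1\le j\le n$, and $(i,j)\ne (m,n)$. By the definition of $C_{i,j}^k$, the number of facets with $\mu(F_x) = x_{i,j}$ and $k'(F) = k$ equals $C_{i,j}^k$. Thus for $k\ge 1$, using the case split above,
\[
h_k = C_{m,n-1}^{k} + \mathop{\sum_{(i,j)\ne (m,n-1)}}_{(i,j)\ne (m,n)} C_{i,j}^{k-1},
\]
which is precisely \eqref{hkeqn}. Finally, $h_0 = 1$ follows from the general fact that in any shelling exactly one facet (the first) has empty corner set, together with the remark after the definition of $c(F_t)$.

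No step here is really the main obstacle, since Lemma~\ref{corners} has done the combinatorial heavy lifting; the only minor care needed is to keep track of the single exceptional leader $(i,j) = (m,n-1)$ when translating $k'(F)$ to $|c(F)|$, which is exactly what separates the term $C_{m,n-1}^k$ from the shifted sum $\sum C_{i,j}^{k-1}$ in \eqref{hkeqn}.
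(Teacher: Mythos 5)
Your proposal is correct and follows essentially the same route as the paper: identify the Hilbert series of $R/\Inot$ with that of the Stanley--Reisner ring $R/\LT(\Inot)$ of $\Dnot$, invoke shellability and Proposition~\ref{shellableHilb}(ii) together with Corollary~\ref{cor:jonov}, and then partition the facets by $\mu(F_x)$ and apply Lemma~\ref{corners}. Your explicit case split $|c(F)| = k'(F)+1$ versus $|c(F)|=k'(F)$ according to whether $\mu(F_x)\ne x_{m,n-1}$ is exactly the bookkeeping the paper leaves implicit in its final sentence, so there is nothing to add.
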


\begin{proof}
It is well-known that the (Krull) dimension as well as the Hilbert series of $R/\Inot$ coincides with that of $R/\LT(\Inot)$ (see, e.g., \cite[\S 3]{BrCo}), where $\LT(\Inot)$ denotes the leading term ideal of $\Inot$ as in \cite{KoSe2} and \cite[Prop. 1.1]{boyan}. Now $\Dnot$ is precisely the simplicial complex such that $R/\LT(\Inot)$ is the Stanley-Reisner ring of $\Dnot$. Thus, it follows from Corollary \ref{cor:jonov} and part (ii) of Proposition \ref{shellableHilb} that the Hilbert series of $R/\Inot$ is given by \eqref{hilbgen}, where $h_0=1$ and for $k\ge 1$,
$$
h_k = \left| \left\{ F : F \text{ a facet of $\Dnot$ with } \left| c(F)\right| = k \right\}\right|.
$$
Partitioning the facets $F=F_xF_y$ in the above set in accordance with the values of $\mu(F_x)$ and noting from Proposition \ref{facets} that
$\mu(F_x)\ne (m,n)$, and then applying Lemma \ref{corners}, we obtain the desired result.
\end{proof}

We have seen in Section \ref{sec:mult} that lattice path monomials can be related to lattice paths in the sense of \S \ref{latpaths} if we rotate to the left by $90^{\circ}$ and identify the variable $x_{i,j}$ with the point $(i,j)$ of $\Z^2$. Also recall that
for any $(a,a'), (e,e') \in \Z^2$ and $s \in \Z$, we denote by $\Path_s((a,a') \to (e,e'))$ the set of lattice paths from $(a,a')$ to $(e,e')$ with $s$ NE-turns.
Likewise, if $(a_i, a'_i), (e_i, e'_i)\in \Z^2$ for $i=1, 2$ and $s \in \Z$, then by
$\Path_s\left((a_1, a'_1)\to (e_1, e'_1), \; (a_2, a'_2)\to (e_2, e'_2)\right)$
we denote the set of pairs $(L_1, L_2)$ of nonintersecting lattice paths such that $L_i$ is from  $(a_i, a'_i)$ to $(e_i, e'_i)$ for
$i=1, 2$, and the paths $L_1$ and $L_2$ together have exactly $s$ NE-turns.  Evidently, these sets are empty (and hence of cardinality $0$) when $s<0$.

\begin{figure}[t]
\centering
\includegraphics[scale=0.22]{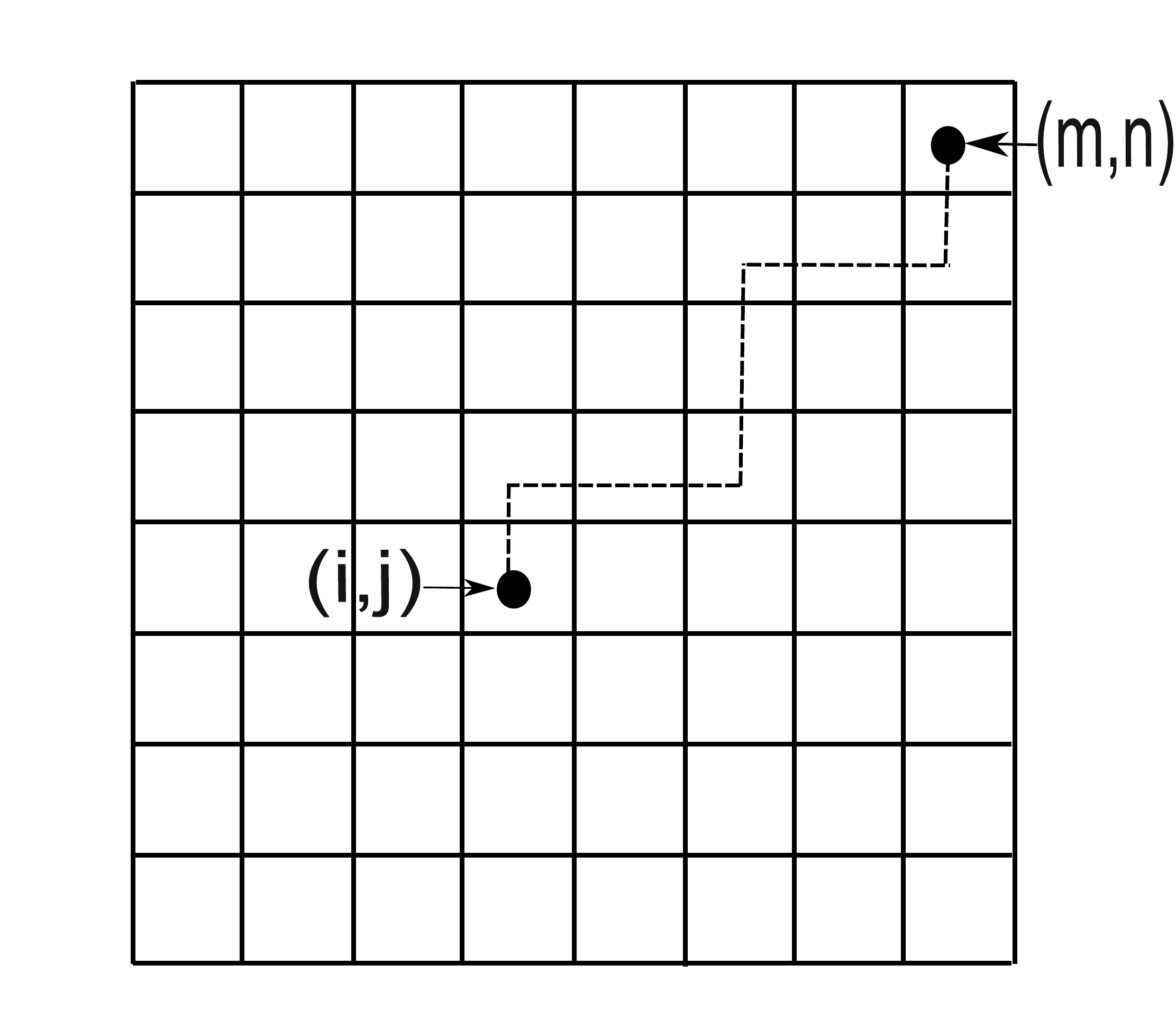}
\qquad
\includegraphics[scale=0.22]{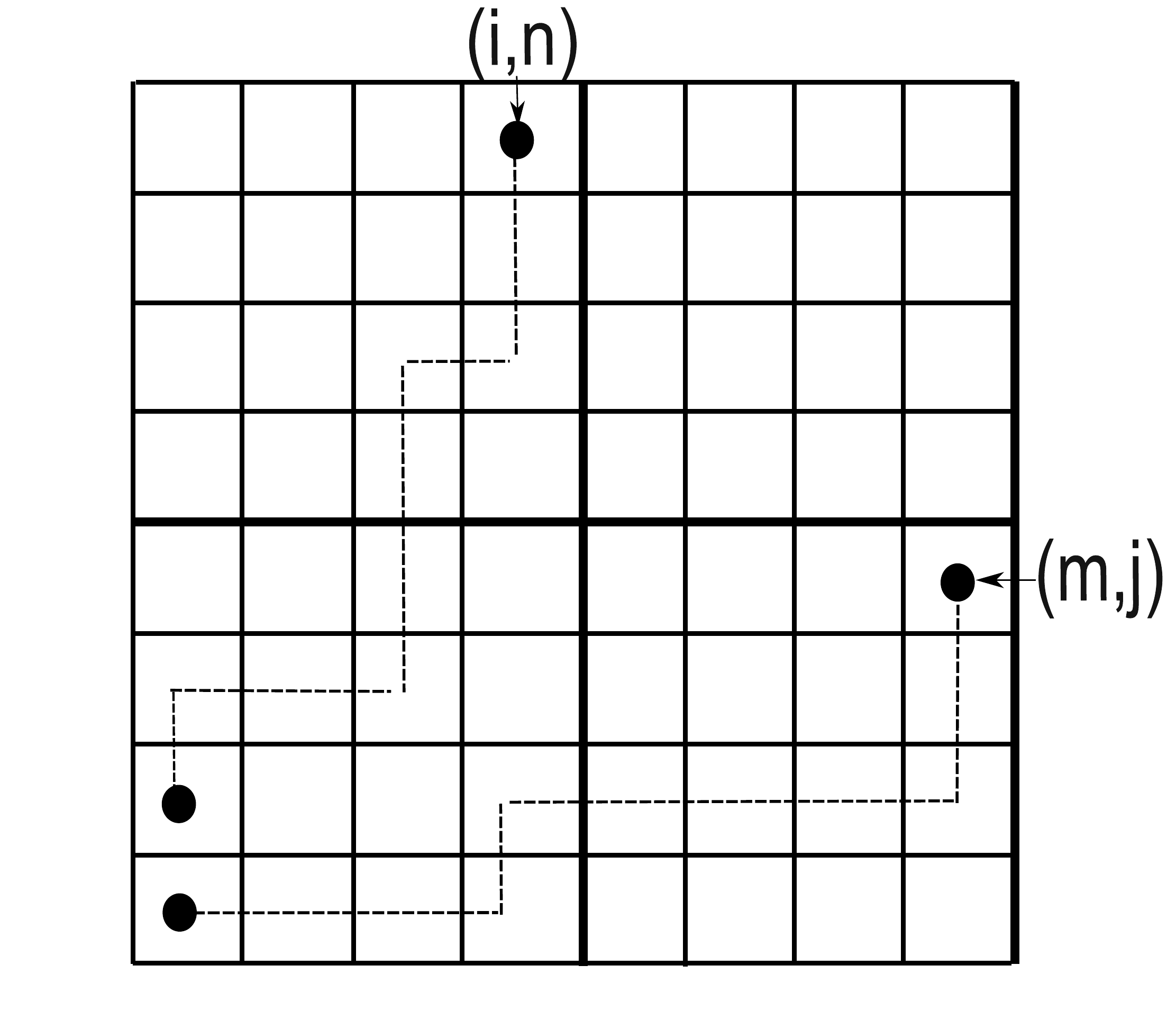}
\caption{Lattice paths $L$ and $(L_1,L_2)$ corresponding to $F_x$ and $(F_y^{\mathsf{U}}, F_y^{\mathsf{L}})$}
\label{LpathFigFxFy} 
\end{figure}

\begin{lemma}
\label{CijToPaths}
Let $s, i,j\in \Z$ with $s\ge 0$, $1\le i \le m$ and $1\le j\le n$. 
\begin{enumerate}
	\item[{\rm (i)}] If $i\ne m$, then
	$$
	C_{i,j}^s = \sum_{s_1+s_2=s} \left|\Path_{s_1}\!\left((i,j)\to (m,n)\right)\right| \left| \Path_{s_2}\!\left((1,2)\to (i,n), \; (1,1)\to (m,j)\right)\right|,
	$$
	where the sum is over pairs $(s_1, s_2)$ of nonnegative integers 
	with $s_1+s_2=s$.
	\item[{\rm (ii)}] If $1< j < n-1$, then
\begin{eqnarray*}
	C_{m,j}^s &=& \sum_{p=1}^{m-1}\sum_{q=j+1}^{n-1} \left|\Path_{s-1}\left((1,2)\to (p,q), \; (1,1)\to (m,j)\right)\right| \\
 & & + \sum_{p=1}^{m-2} \left|\Path_{s-1}\left((1,2)\to (p,j), \; (1,1)\to (m,j)\right) \right|\\
 & & +  \left|\Path_{s}\left((1,2)\to (m-1,j), \; (1,1)\to (m,j)\right)\right|. \\
	\end{eqnarray*}
	\item[{\rm (iii)}]
	$C_{m,1}^s = {\binom{n-2}{s}}{\binom{m-1}{s}}$ and
	\begin{eqnarray*}
	C_{m,n-1}^s & = &  \sum_{p=1}^{m-2} \left|\Path_{s-1}\left((1,2)\to (p,n-1), \; (1,1)\to (m,n-1)\right) \right| \\
	& & + \left|\Path_{s}\left((1,2)\to (m-1,n-1), \; (1,1)\to (m,n-1)\right) \right|.
	\end{eqnarray*}
\end{enumerate}
\end{lemma}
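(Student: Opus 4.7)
I plan to prove each of the three parts by translating the enumeration of facets into one of lattice paths via Proposition~\ref{facets} and Lemma~\ref{corners}. First, I identify $F_x$ with the lattice path from $(i,j)$ to $(m,n)$ whose ES-turns become NE-turns under the rotation discussed in Section~\ref{sec:mult}. For the $y$-components, the non-intersection of $F_y^{\mathsf{U}}$ (starting at $(1,1)$) and $F_y^{\mathsf{L}}$ (starting at $(2,1)$) forces $F_y^{\mathsf{U}}$ to begin with an East step into $(1,2)$, for otherwise it would meet $F_y^{\mathsf{L}}$ at $(2,1)$. Dropping this forced first step of $F_y^{\mathsf{U}}$ and prepending to $F_y^{\mathsf{L}}$ the similarly forced step $(1,1)\to(2,1)$ produces a bijection between pairs $(F_y^{\mathsf{U}}, F_y^{\mathsf{L}})$ and non-intersecting pairs $(L_1, L_2)$ with $L_1 : (1,2) \to (i,n)$ and $L_2 : (1,1) \to (m,j)$. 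Under this bijection, NE-turns of $L_1$ coincide with NE-turns of $F_y^{\mathsf{U}}$ except that the possible NE-turn of $F_y^{\mathsf{U}}$ at $y_{1,2}$ is no longer an interior turn of $L_1$, and NE-turns of $L_2$ coincide with those of $F_y^{\mathsf{L}}$. Parts (iii)--(iv) of Lemma~\ref{corners} then tell us that every ES-turn of $F$ is a corner except the ones at $y_{1,2}$ (automatically absorbed by the start-shift above) and at $y_{m-1,j+1}$, the latter applying only when $\mu(F_x) = x_{m,j}$.

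When $i \neq m$, the hypothesis of the $y_{m-1,j+1}$ exception fails, so the number of ES-turn corners of $F$ equals the total count of NE-turns across $F_x$, $L_1$, and $L_2$. Splitting $s = s_1 + s_2$ according to whether the NE-turns lie in $F_x$ or in the pair $(L_1, L_2)$ and using \eqref{singlepath} for $|\Path_{s_1}((i,j) \to (m,n))|$ yields part~(i). For part~(iii) with $j = 1$, the disjointness condition forces $F_y^{\mathsf{L}}$ to occupy the entire column $\{y_{2,1}, \dots, y_{m,1}\}$, so $F_y^{\mathsf{U}}$ reduces to a free $L_1 : (1,2) \to (m,n)$; moreover, since $L_1$ lies in columns $\geq 2$, the point $(m-1,1)$ is not on $L_1$, and hence $(m-1,2)$ cannot be an NE-turn of $L_1$, so the $y_{m-1,j+1}$ exception never fires. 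The count is therefore $\binom{m-1}{s}\binom{n-2}{s}$ by \eqref{singlepath}.

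For part~(ii), $F_x$ is the flat row-$m$ path with zero NE-turns, so the whole count lies in $(L_1, L_2)$ with $L_1 : (1,2) \to (m,n)$, and the exception at $(m-1,j+1)$ genuinely matters. I split on whether $(m-1, j+1)$ is an NE-turn of $L_1$. If it is, the row-$m$ constraint forces the tail of $L_1$ beyond $(m-1,j+1)$ to be the unique sequence $(m-1,j+1) \to (m,j+1) \to \dots \to (m,n)$, so $L_1$ is uniquely determined by its restriction $L_1^{\mathrm{pre}} : (1,2) \to (m-1, j)$; because the forced NE-turn at $(m-1, j+1)$ is exactly the non-corner exception, the ES-turn corners of $F$ are precisely the NE-turns of the pair $(L_1^{\mathrm{pre}}, L_2)$, yielding summand~3. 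Otherwise I parametrize by the last NE-turn $(p', q')$ of $L_1$: beyond $(p',q')$ the path is forced to run South to $(m, q')$ then East to $(m, n)$, and avoiding $(m, j) \in L_2$ forces $q' \geq j+1$, while the case hypothesis excludes $(p', q') = (m-1, j+1)$. Truncating $L_1$ just before its East step into $(p', q')$ gives a path $L_1'' : (1,2) \to (p', q'-1)$ with one fewer NE-turn than $L_1$; setting $(p, q) = (p', q'-1)$, the sub-range $q = j$ (which forces $p \leq m-2$) yields summand~2, while $q \geq j + 1$ yields summand~1. The sub-case where $L_1$ has no NE-turns is vacuous because $L_1$ would then have to pass through $(m, j) \in L_2$. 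Part~(iii) for $j = n-1$ is the same analysis with summand~1 empty, since its range $q \geq n$ is incompatible with $q \leq n-1$. The main technical subtlety throughout is the coupling between the two non-corner NE-turns and the non-intersection constraint with $L_2$; in particular, the row-$m$ collision argument pinning down $q' \geq j+1$ and the restriction $p \leq m-2$ in summand~2 is the crux of part~(ii).
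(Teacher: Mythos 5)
Your proposal is correct and takes essentially the same route as the paper's proof: the same dictionary between facets and triples of lattice paths with the shift producing $L_1:(1,2)\to(i,n)$ and $L_2:(1,1)\to(m,j)$, the same appeal to Lemma~\ref{corners} to identify corner ES-turns with NE-turns up to the $y_{1,2}$ and $y_{m-1,j+1}$ exceptions, and the same parametrization by the last NE-turn followed by truncation in parts (ii) and (iii). The only (harmless) differences are expository, e.g.\ you spell out the vacuity of the no-NE-turn case and the non-occurrence of the $y_{m-1,2}$ exception for $C_{m,1}^s$, which the paper leaves implicit.
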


\begin{proof}
%
Let $i,j\in \Z$ with $1\le i \le m$, $1\le j\le n$, and $(i,j)\ne (m,n)$.
By a $90^{\circ}$ rotation to the left,  we see from Proposition~\ref{facets} that the facets $F=F_xF_y$ of $\Dnot$ with $\mu(F_x) = x_{i,j}$
are in one-to-one correspondence with the triples $(L, L^*_1, L^*_2)$ of lattice paths, where $L$ is from $(i,j)$ to $(m,n)$, while $L^*_1$ is from $(1,1)$ to $(i,n)$ and $L^*_2$ is from $(2,1)$ to 
%
 %
$(m,j)$, and moreover $L^*_1, L^*_2$ are nonintersecting. We will now modify $L^*_1, L^*_2$
slightly keeping in mind the hypothesis in Corollary~\ref{corpath}. 
To this end, first note that $(1,2)\in L^*_1$
%
 %
 since $2< m\le n$. 
Thus if we let $L_1:= L^*_1\setminus\{(1,1)\}$ and $L_2:=L^*_2 \cup\{(1,1)\}$, then
$(L^*_1, L^*_2)$ and $(L_1, L_2)$ are pairs of nonintersecting lattice paths that determine each other and have exactly the same NE-turns, except 
%
 %
that if $L^*_1$ had a NE turn at $(1,2)$, then $L_1$ will not have a NE turn at $(1,2$). Note though, that by Lemma \ref{corners} (iv), $y_{1,2}$ is not a corner of any facet, and this switch will therefore not affect the count of corners.
Consequently, the facets $F=F_xF_y$ of $\Dnot$ with $\mu(F_x) = x_{i,j}$ are in one-to-one correspondence with
$\Path\left((i,j)\to (m,n)\right) \times \Path\left((1,2)\to (i,n), \; (1,1)\to (m,j)\right)$.
The lattice paths $L$ and $(L_1,L_2)$ corresponding to the components $F_x$ and $(F_y^{\mathsf{U}}, F_y^{\mathsf{L}})$ of the facet $F=F_xF_y$ are
illustrated in
 %
 %
Figure~\ref{LpathFigFxFy}; these may be compared with Figure~\ref{FigFxFy} that depicts
the lattice path monomials $F_x$ and $F_y = F_y^{\mathsf{U}} F_y^{\mathsf{L}}$.

(i) Suppose $i\ne m$. Then from Lemma~\ref{corners}, we see that 
for every facet $F=F_xF_y$ of $\Dnot$ with $\mu(F_x) = x_{i,j}$, 
all the ES-turns of 
 %
 %
$F_x$ or $F_y^{\mathsf{U}}$ or $F_y^{\mathsf{L}}$  that are in $c(F)$
%
 %
correspond to the NE-turns of the corresponding lattice paths $L$ or $L_1$ or $L_2$. From this, we readily obtain the formula in (i).

(ii) Suppose $i = m$ and $1< j < n-1$. Then for a facet $F=F_xF_y$ of $\Dnot$ with $\mu(F_x) = x_{m,j}$, the lattice path $L$ corresponding to $F_x$  is from $(m,j)$ to $(m,n)$ and evidently, this has no NE-turns. Now consider the pair $(L_1, L_2)$ in $\Path\left((1,2)\to (i,n), \; (1,1)\to (m,j)\right)$ 
corresponding to $\left(F_y^{\mathsf{U}}, \, F_y^{\mathsf{L}}\right)$. Suppose the last NE-turn of $L_1$ is at $(p, q+1)$.
Note that if $q<j$, then we must have $(m,j)\in L_1$, which contradicts the fact that $L_1, L_2$ are nonintersecting. Thus 
$1\le p\le m-1$ and $j\le q < n$. Moreover, if $q=j$, then by part (iv) of Lemma~\ref{corners}, we see that either $p\le m-2$ or the NE-turn
$(p,q+1)$ is not in $c(F)$. It follows that $L_1$ can be replaced by 
its truncation $\tilde L_1$, which is a lattice path from $(1,2)$ to
 %
 %
 $(p, q)$ such that $\tilde L_1$ and $L_2$ are nonintersecting.
Moreover, the number of NE-turns of $\tilde L_1$ in $c(F)$ are exactly one less than the number of NE-turns of $L_1$ in $c(F)$, except
when $(p,q)=(m-1, j)$ in which case they are the same. Thus by varying $(p,q)$ over an appropriate range, we obtain the formula in (ii).

(iii) If $(i,j) = (m,1)$ and $F=F_xF_y$ is a facet of $\Dnot$ with $\mu(F_x) = x_{m,1}$, then the path $L$ corresponding to $F_x$ as well as the path $L_2$ corresponding to
%
 %
$F_y^{\mathsf{L}}$ have no NE-turns. Moreover, every NE-turn of the path $L_1\in \Path\left((1,2)\to (m,n)\right)$ corresponding to
 %
 %
 $F_y^{\mathsf{U}}$ is necessarily in $c(F)$, thanks to Lemma~\ref{corners}. Thus, in view of \eqref{singlepath}, we see that
$C_{m,1}^s = {\binom{n-2}{s}}{\binom{m-1}{s}}$. Finally, if $(i,j)=(m,n-1)$, then arguing as in (ii) above, we see that for a facet
$F=F_xF_y$ of $\Dnot$ with $\mu(F_x) = x_{m,n-1}$, the lattice path
$L$ corresponding to $F_x$ has no NE-turns and the last NE-turn of the lattice path $L_1$ corresponding to
%
 %
 $F_y^{\mathsf{U}}$
must be $(p, n)$ for some $p=1, \dots , m-1$. Moreover, by Lemma~\ref{corners}, this turn is counted as a corner (i.e., $x_{p,n}\in c(F)$)
if and only if $p< m-1$. Thus upon replacing $L_1$ by its truncation up to $(p, n-1)$, we obtain the desired formula for $C_{m,n-1}^s$ in (iii).
\end{proof}

We can already use the results obtained thus far to write down an explicit 
formula for the Hilbert series of the graded ring $R/\Inot$ corresponding to $Z_0$. 
Indeed, it suffices to combine Corollary~\ref{hkCij},  Lemma~\ref{CijToPaths}, 
and Corollary~\ref{corpath}.
However the resulting formula is much too complicated and we will instead use 
results in Section \ref{sec2} for simplifying various terms in \eqref{hkeqn} so as
to eventually arrive at an elegant formula for \eqref{hilbgen}.

\begin{lemma}
\label{S1S2}
Let $k$ be a positive integer. Then
$C_{m,n-1}^k $ is equal to
$$
\sum_{t_1+t_2=k} {\binom{m-2}{t_1}}{\binom{n-2}{t_1}}{\binom{m-1}{t_2}}{\binom{n-2}{t_2}}
- {\binom{m-1}{t_2+1}}{\binom{n-2}{t_2}}{\binom{m-2}{t_1-1}}{\binom{n-2}{t_1}}.
$$
\end{lemma}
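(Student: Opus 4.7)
The plan is to start from the expression for $C_{m,n-1}^k$ in Lemma~\ref{CijToPaths}(iii), apply Corollary~\ref{corpath} to each path count, then reduce the sum over $p$ using Lemma~\ref{GPL}, and finally merge the contributions from $p=1,\dots,m-2$ with the contribution from $p=m-1$ by means of the Pascal identity \eqref{Pascal}.

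First, I would apply Corollary~\ref{corpath} with $(a,b)=(p,n-1)$, $(c,d)=(m,n-1)$ and parameter $k-1$ (for $1\le p\le m-2$), and with $(a,b)=(m-1,n-1)$, $(c,d)=(m,n-1)$ and parameter $k$ (for the last term). Summing the first $m-2$ expressions over $p$, the only factors depending on $p$ are $\binom{p-1}{s_1}$ and $\binom{p}{s_2+1}$. By Lemma~\ref{GPL},
$$\sum_{p=1}^{m-2}\binom{p-1}{s_1}=\binom{m-2}{s_1+1}\quad\text{and}\quad\sum_{p=1}^{m-2}\binom{p}{s_2+1}=\binom{m-1}{s_2+2},$$
the stray boundary terms being zero by \eqref{binomzero}. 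This yields
$$\sum_{p=1}^{m-2}|\Path_{k-1}|=\sum_{s_1+s_2=k-1}\binom{m-2}{s_1+1}\binom{n-3}{s_1}\binom{m-1}{s_2}\binom{n-2}{s_2}-\binom{m-1}{s_2+2}\binom{n-3}{s_2}\binom{m-2}{s_1-1}\binom{n-2}{s_1},$$
while for the last piece we get the Corollary~\ref{corpath} expression with $s=k$ and $(a,b,c,d)=(m-1,n-1,m,n-1)$.

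Next, I would reindex both pieces so that the outer sum runs over $t_1+t_2=k$, using the substitutions of \S\ref{subsec:alt}. In the first piece, the substitution $(t_1,t_2)=(s_1+1,s_2)$ converts the positive term into $\binom{m-2}{t_1}\binom{n-3}{t_1-1}\binom{m-1}{t_2}\binom{n-2}{t_2}$, and $(t_1,t_2)=(s_1,s_2+1)$ converts the negative term into $\binom{m-1}{t_2+1}\binom{n-3}{t_2-1}\binom{m-2}{t_1-1}\binom{n-2}{t_1}$. In the second piece, I simply relabel $(s_1,s_2)$ as $(t_1,t_2)$ to obtain $\binom{m-2}{t_1}\binom{n-3}{t_1}\binom{m-1}{t_2}\binom{n-2}{t_2}$ and $\binom{m-1}{t_2+1}\binom{n-3}{t_2}\binom{m-2}{t_1-1}\binom{n-2}{t_1}$.

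Finally, adding the positive contributions and pulling out the common factors gives $\binom{m-2}{t_1}\binom{m-1}{t_2}\binom{n-2}{t_2}\bigl[\binom{n-3}{t_1-1}+\binom{n-3}{t_1}\bigr]$, which collapses via \eqref{Pascal} to $\binom{m-2}{t_1}\binom{n-2}{t_1}\binom{m-1}{t_2}\binom{n-2}{t_2}$. Similarly, the negative contributions combine to $\binom{m-1}{t_2+1}\binom{m-2}{t_1-1}\binom{n-2}{t_1}\bigl[\binom{n-3}{t_2-1}+\binom{n-3}{t_2}\bigr]=\binom{m-1}{t_2+1}\binom{n-2}{t_2}\binom{m-2}{t_1-1}\binom{n-2}{t_1}$. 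Summed over $t_1+t_2=k$, this is exactly the formula in the lemma. The main obstacle I anticipate is purely bookkeeping: keeping the two reindexings aligned so that the Pascal identity applies cleanly to both the positive and negative parts, and verifying that the boundary corrections from Lemma~\ref{GPL} vanish by \eqref{binomzero}.
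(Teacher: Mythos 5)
Your proposal is correct and follows essentially the same route as the paper's proof: starting from Lemma~\ref{CijToPaths}(iii), applying Corollary~\ref{corpath} with the same parameters, summing over $p$ via Lemma~\ref{GPL} (with the vanishing boundary terms handled by \eqref{binomzero}), reindexing as in \eqref{translation}, and merging the two pieces with \eqref{Pascal}. The intermediate expressions you obtain agree with the paper's \eqref{eq:S1} and \eqref{eq:S2}, so no changes are needed.
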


\begin{proof}
For 
$s\in \Z$, let $f(s):= {\binom{m-1}{s}}{\binom{n-2}{s}}$ and $g(s):= {\binom{m-2}{s-1}}{\binom{n-2}{s}}$.
By Corollary~\ref{corpath},
\begin{eqnarray}
\label{eq:S1}
\nonumber & & \sum_{p=1}^{m-2} \left|\Path_{k-1}\left((1,2)\to (p,n-1), \; (1,1)\to (m,n-1)\right) \right|  \\ \nonumber
&=& \sum_{p=1}^{m-2} \sum_{s_1+s_2=k-1} {\binom{p-1}{s_1}}{\binom{n-3}{s_1}} f(s_2) - {\binom{p}{s_2+1}}{\binom{n-3}{s_2}}g(s_1) \\ \nonumber
&=& 
\sum_{s_1+s_2=k-1} \left(\sum_{p'=0}^{m-3}{\binom{p'}{s_1}}\right)  {\binom{n-3}{s_1}} f(s_2)
- \left(\sum_{p=1}^{m-2}{\binom{p}{s_2+1}}\right) {\binom{n-3}{s_2}}g(s_1) \\ \nonumber
&=& \sum_{s_1+s_2=k-1} {\binom{m-2}{s_1+1}} {\binom{n-3}{s_1}}f(s_2)
- {\binom{m-1}{s_2+2}}{\binom{n-3}{s_2}}g(s_1) \\
&=& \sum_{t_1+t_2=k} {\binom{m-2}{t_1}}{\binom{n-3}{t_1-1}} f(t_2) - {\binom{m-1}{t_2+1}}{\binom{n-3}{t_2-1}} g(t_1),
\end{eqnarray}
where the penultimate equality follows from Lemma \ref{GPL} 
since ${\binom{0}{s_1+1}} = 0 ={\binom{1}{s_2+2}}$ for $s_1, s_2\ge 0$
and also 
since ${\binom{n-3}{s_1}}f(s_2) = 0 = {\binom{n-3}{s_2}}g(s_1)$ if 
$s_1<0$ or $s_2<0$,
while the last equality follows  
by 
altering the summations (twice!) as in \eqref{translation}.
On the other hand, 
by Corollary~\ref{corpath}, 
$
\left|\Path_{k}\left((1,2)\to (m-1,n-1), \; (1,1)\to (m,n-1)\right) \right|
$
is equal to
\begin{equation}
\label{eq:S2}
\sum_{t_1+t_2=k} {\binom{m-2}{t_1}}{\binom{n-3}{t_1}} f(t_2) - {\binom{m-1}{t_2+1}}{\binom{n-3}{t_2}}g(t_1).
\end{equation}
Now combining \eqref{eq:S1} and \eqref{eq:S2} using \eqref{Pascal}, and then using part (iii) of Lemma~\ref{CijToPaths}, we obtain the desired result.
\end{proof}

\begin{lemma}
\label{S3E3}
Let $k$ be a positive integer. Then
$\sum_{i=1}^{m-1}\sum_{j=1}^n C_{i,j}^{k-1}$ is equal to
$$
\sum_{t_1+t_2=k} {\binom{m}{t_2}}{\binom{n}{t_1+1}} {\binom{m-1}{t_1}}{\binom{n-2}{t_2-1}}
- {\binom{m-1}{t_1}}{\binom{n}{t_2}}{\binom{m-1}{t_2-1}}{\binom{n-2}{t_1}}.
$$
\end{lemma}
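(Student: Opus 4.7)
The plan is to generalize the approach used in Lemma~\ref{S1S2} but with an additional layer of summation. First, I would apply part (i) of Lemma~\ref{CijToPaths} (which applies since $1 \le i \le m-1$) together with \eqref{singlepath} and Corollary~\ref{corpath} to expand $C_{i,j}^{k-1}$ as a triple sum over nonnegative integers $s_1, u_1, u_2$ with $s_1+u_1+u_2 = k-1$, of a difference of two products of six binomials in $i, j, s_1, u_1, u_2$.

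Next, I would interchange the order of summation and perform the inner sums over $i$ and $j$ first. Each such inner sum has the shape $\sum_{i=1}^{m-1}\binom{m-i}{s_1}\binom{i-1}{u_1}$ or a minor variant. After an index shift, this evaluates by the standard staircase identity $\sum_{i=0}^N\binom{N-i}{a}\binom{i}{b} = \binom{N+1}{a+b+1}$ (essentially a form of \eqref{Chu1}) to a main term such as $\binom{m}{s_1+u_1+1}$, plus a boundary correction arising from the fact that the range of $i$ stops at $m-1$ rather than $m$; this correction vanishes unless $s_1 = 0$. The $j$-sums run over their full natural range and evaluate without correction.

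At this stage the total has the form $S_1^a - S_1^b - S_2^a + S_2^b$, where $S_1^a$ and $S_2^a$ are triple sums in $s_1, u_1, u_2$ and $S_1^b, S_2^b$ are double sums arising as the $s_1=0$ boundary corrections. Reparametrizing by indices $t_1, t_2$ with $t_1+t_2 = k$ (for instance $t_1 = u_2$ and $t_2 = u_1+1$), the boundary pieces $S_1^b, S_2^b$ become single sums directly. To collapse the main pieces $S_1^a, S_2^a$ to single sums as well, I would apply \eqref{Chu2} to perform one more summation (with the Pascal identity \eqref{Pascal} used to bridge between binomials whose indices differ by one), together with the alterations of summations from Section~\ref{subsec:alt}. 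The four resulting single sums should then combine and telescope to yield the two terms on the right-hand side of the lemma.

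The main obstacle I anticipate is the intricate combinatorial bookkeeping. In particular, since the Chu-Vandermonde identities \eqref{Chu1} and \eqref{Chu2} are stated with summation over all integers, whereas our sums are essentially finite with specified ranges, one must carefully verify that the truncated ranges do not cut off nonzero summands (or else carefully track the terms that are cut off). Organizing the calculation around the symmetry $u_1 \leftrightarrow u_2$ arising from the ``upper'' and ``lower'' components $F_y^{\mathsf{U}}$ and $F_y^{\mathsf{L}}$ in Proposition~\ref{facets} may help tame the complexity.
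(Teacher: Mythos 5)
Your setup coincides with the paper's proof up to and including the boundary correction: expand via \eqref{singlepath}, part (i) of Lemma~\ref{CijToPaths} and Corollary~\ref{corpath}; interchange the order of summation; evaluate the $j$-sums over their full range (after rewriting with \eqref{VminusA} and \eqref{binomzero}) by \eqref{Chu2}; and evaluate the $i$-sums by adding and subtracting the $i=m$ term, whose contribution is a Kronecker delta supported on the case where the $F_x$-path has no turns. This is exactly how the paper arrives at a decomposition of the form ``main triple sum plus boundary double sum,'' and your boundary pieces do indeed become single sums after \eqref{translation}, matching \eqref{S3pure}.

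The gap is in your treatment of the main pieces. You propose to collapse each of them to a single sum by one application of \eqref{Chu2}, but this cannot work as stated: in the main summand all four binomial factors involve the two free parameters (the turn counts $s_1,s_2$, with $k_1=k-1-s_1-s_2$), so no pair of factors can be convolved independently of the others, and moreover the range is truncated by $k_1\ge 0$, i.e.\ $s_1+s_2\le k-1$, which blocks the full-range convolution that \eqref{Chu1}--\eqref{Chu2} require. A Vandermonde convolution would also merge two binomials into one, whereas the target summand still has four binomial factors. The missing idea is that the two main summands must first be \emph{combined}, not collapsed separately: using \eqref{Pascal} to split $\binom{m+1}{k-s_1+1}$, their difference becomes an exact difference $f(s_1,s_2)-f(s_1-1,s_2)$ with $f(s_1,s_2)=\binom{m}{k-s_1}\binom{n}{k-s_2}\binom{m-2}{s_1}\binom{n-2}{s_2}$, and then the sum over the levels $\ell=s_1+s_2=0,\dots,k-1$ (which is precisely where the truncation $k_1\ge 0$ enters) telescopes to the single diagonal sum $F_{k-1}$, giving \eqref{E3pure}; a final termwise application of \eqref{Pascal} against the boundary piece \eqref{S3pure} then yields the stated formula. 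You do invoke ``telescoping,'' but only at the very end to combine four already-single sums, which is not where the essential telescoping occurs; without the Pascal-difference step your main triple sums remain uncollapsed.
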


\begin{proof}
Using \eqref{singlepath} and part (i) of Lemma~\ref{CijToPaths}, we see that
$\sum_{i=1}^{m-1}\sum_{j=1}^n C_{i,j}^{k-1}$ equals 
$$
\sum_{i=1}^{m-1}\sum_{j=1}^n \sum_{k_1+k_2=k-1} {\binom{m-i}{k_1}}{\binom{n-j}{k_1}}
\left|\Path_{k_2}\left((1,2)\to (i,n), (1,1)\to (m,j)\right)\right|
$$ 
Applying Corollary~\ref{corpath} and then suitably interchanging summations and noting that the summands below are zero if $k_1<0$ or $s_1<0$ or $s_2<0$, this can be written as
\begin{equation}
\label{beast}
\mathop{\sum_{k_1+s_1+s_2=k-1}}_{k_1, s_1, s_2\ge 0} M_1 N_1 {\binom{m-1}{s_2}}{\binom{n-2}{s_1}}
- M_2 N_2 {\binom{m-2}{s_1-1}}{\binom{n-2}{s_2}},
\end{equation}
where for any given $k_1, s_1, s_2\ge 0$, we have temporarily put
\begin{eqnarray*}
& M_1 = \displaystyle{\sum_{i=1}^{m-1} {\binom{m-i}{k_1}} {\binom{i-1}{s_1}}}, \quad
& N_1 = \sum_{j=1}^{n} {\binom{n-j}{k_1}} {\binom{j-1}{s_2}} = {\binom{n}{k_1+s_2+1}}, \\
& M_2 = \displaystyle{\sum_{i=1}^{m-1} {\binom{m-i}{k_1}} {\binom{i}{s_2+1}}}, \quad
& N_2 = \sum_{j=1}^{n} {\binom{n-j}{k_1}} {\binom{j-1}{s_1}} = {\binom{n}{k_1+s_1+1}},
\end{eqnarray*}
and where the simplified expressions for $N_1, N_2$ follow by rewriting each of the summands in $N_1$ and $N_2$ using \eqref{VminusA}, invoking \eqref{binomzero} (noting that $k_1, s_1, s_2 \ge 0$),  and then applying \eqref{Chu2} for suitable values of ``$s$'', ``$t$'', ``$\alpha$'' and ``$\beta$.''
 A similar simplification is possible in $M_1$ and $M_2$ if we add and subtract the term corresponding to $i=m$, and in view of \eqref{binomzero}, this is only necessary if $k_1=0$. Thus, 
$$
M_1 = {\binom{m}{k_1+s_1+1}} - \delta_{0,k_1}{\binom{m-1}{s_1}} \text{ and }
M_2 = {\binom{m+1}{k_1+s_2+2}} - \delta_{0,k_1}{\binom{m}{s_2+1}},
$$ 
where $\delta$ is the Kronecker delta. Substituting the simplified values of $M_1, N_1, M_2, N_2$ in \eqref{beast},
and letting $A(s_1, s_2):= {\binom{m-1}{s_2}}{\binom{n-2}{s_1}}$ and $B(s_1, s_2) := {\binom{m-2}{s_1-1}} {\binom{n-2}{s_2}}$
for $s_1, s_2\in \Z$, we see that \eqref{beast} is of the form $E_3+S_3$, where
$$
E_3 = \mathop{\sum_{k_1+s_1+s_2=k-1}}_{k_1, s_1, s_2\ge 0} {\binom{m}{k-s_2}}{\binom{n}{k-s_1}} A(s_1, s_2) 
- {\binom{m+1}{k-s_1+1}}{\binom{n}{k-s_2}} B(s_1, s_2) 
$$
and $S_3$ is the part where the Kronecker delta is nonzero, i.e.,
$$
S_3= \sum_{s_1+s_2=k-1} {\binom{m}{s_2+1}}{\binom{n}{s_1+1}} B(s_1, s_2) 
- {\binom{m-1}{s_1}}{\binom{n}{s_2+1}} A(s_1, s_2). 
$$
Altering the summation as in \eqref{translation}, we see that $S_3$ can be written as
\begin{equation}
\label{S3pure}
\sum_{t_1+t_2=k} {\binom{m}{t_2}}{\binom{n}{t_1+1}} {\binom{m-2}{t_1-1}}{\binom{n-2}{t_2-1}}
- {\binom{m-1}{t_1}}{\binom{n}{t_2}}{\binom{m-1}{t_2-1}}{\binom{n-2}{t_1}}.
\end{equation}
On the other hand, in view of \eqref{binomzero} and \eqref{alternating}, we can write
$$
E_3 = \sum_{\ell=0}^{k-1} \sum_{s_1+s_2=\ell} {\binom{m}{k-s_1}}{\binom{n}{k-s_2}} A(s_2, s_1) -
{\binom{m+1}{k-s_1+1}}{\binom{n}{k-s_2}} B(s_1, s_2).
$$
By \eqref{Pascal}, ${\binom{m+1}{k-s_1+1}} = {\binom{m}{k-s_1}} + {\binom{m}{k-(s_1-1)}}$ and using this to split the second summand 
in $E_3$ into two parts and combining one of the parts with the first summand in $E_3$ and then applying \eqref{Pascal} once again, we see that
$$
E_3 =  \sum_{\ell=0}^{k-1} \sum_{s_1+s_2=\ell} f(s_1, s_2) - f(s_1-1, s_2),
$$
where $f(s_1, s_2):= {\binom{m}{k-s_1}} {\binom{n}{k-s_2}} {\binom{m-2}{s_1}} {\binom{n-2}{s_2}}$ for $s_1, s_2\in \Z$. Now in view of \eqref{translation}, we find that $E_3$ is given by the telescoping sum
$$
E_3 = \sum_{\ell=0}^{k-1} F_{\ell} - F_{\ell-1}, \quad \text{ where for } \ell\in \Z, \quad F_{\ell}:= \sum_{s_1+s_2=\ell} f(s_1, s_2).
$$
From the definition of $f$, we see that $F_{-1}=0$, and thus $E_3 = F_{k-1}$, i.e., 
$$
E_3= \sum_{s_1+s_2=k-1} {\binom{m}{k-s_1}} {\binom{n}{k-s_2}} {\binom{m-2}{s_1}} {\binom{n-2}{s_2}}.
$$
Now we can replace $k-s_1, k-s_2$ by $s_{2}+1, s_1+1$, respectively, 
in the above summand, and then alter the summation using \eqref{translation} to obtain
\begin{equation}
\label{E3pure}
E_3
= \sum_{t_1+t_2=k} {\binom{m}{t_2}} {\binom{n}{t_1+1}} {\binom{m-2}{t_1}} {\binom{n-2}{t_2-1}}
\end{equation}
Finally by 
adding \eqref{E3pure} and \eqref{S3pure} termwise
and using \eqref{Pascal}, we obtain the desired formula
for $E_3+S_3$, i.e., for $\sum_{i=1}^{m-1}\sum_{j=1}^n C_{i,j}^{k-1}$.
\end{proof}

\begin{lemma}
\label{S4-S7}
Let $k$ be a positive integer. Then
$\sum_{j=1}^{n-2} C_{m,j}^{k-1}$ is equal to
$$
\sum_{t_1+t_2=k} {\binom{m-1}{t_1}}{\binom{n-2}{t_1}} {\binom{m-1}{t_2-1}}{\binom{n-2}{t_2}}
- {\binom{m}{t_2+1}}{\binom{n-2}{t_2}}{\binom{m-2}{t_1-2}}{\binom{n-2}{t_1}}.
$$
\end{lemma}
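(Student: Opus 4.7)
\bigskip

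\noindent\emph{Proof proposal.}  The strategy parallels the proofs of Lemmas \ref{S1S2} and \ref{S3E3}. I would first split
$$
\sum_{j=1}^{n-2} C_{m,j}^{k-1} \;=\; C_{m,1}^{k-1} \;+\; \sum_{j=2}^{n-2} C_{m,j}^{k-1}.
$$
For the first summand, part (iii) of Lemma \ref{CijToPaths} immediately gives $C_{m,1}^{k-1} = \binom{n-2}{k-1}\binom{m-1}{k-1}$. For $2 \le j \le n-2$ we have $1 < j < n-1$, so part (ii) of Lemma \ref{CijToPaths} applies and expands each $C_{m,j}^{k-1}$ as a sum of three pieces: a double sum over $(p,q)$ with $1 \le p \le m-1$ and $j+1 \le q \le n-1$ of path counts with $k-2$ NE-turns, a single sum over $1 \le p \le m-2$ (with $q=j$) of path counts with $k-2$ NE-turns, and a boundary contribution at $(p,q)=(m-1,j)$ with $k-1$ NE-turns.

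Each of these nonintersecting path counts is then replaced by its expansion from Corollary \ref{corpath} into a sum of two products of four binomial coefficients, giving a total of six types of summands per $j$. After interchanging the order of summation, the inner sums over $p$ and $q$, as well as the outer sum over $j$, can each be evaluated: sums of the form $\sum_p \binom{p-1}{\alpha}$, $\sum_q \binom{n-q-2}{\alpha}$, or $\sum_j \binom{j-1}{\alpha}$ collapse by Lemma \ref{GPL}, while genuinely entangled sums of the form $\sum_j \binom{j-1}{\alpha}\binom{n-j-1}{\beta}$ yield closed forms via the Chu-Vandermonde identity \eqref{Chu2} after a rewrite using \eqref{VminusA} and \eqref{binomzero}. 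The contribution $C_{m,1}^{k-1}$ should be absorbed naturally when one performs the sums over the ``$p=m-1$'' boundary layer with the right index shifts.

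At this stage the expression is a sum of products of four binomials indexed by pairs $(s_1,s_2)$ with $s_1+s_2$ either $k-2$ or $k-1$. Following the template in the proof of Lemma \ref{S3E3}, I would organize the result as a sum $E_4 + S_4$, where $S_4$ is (up to a single application of the Pascal identity \eqref{Pascal}) already in the desired form, while $E_4$ is a telescoping sum: splitting one factor $\binom{m+1}{\cdot}$ as $\binom{m}{\cdot}+\binom{m}{\cdot}$ by \eqref{Pascal}, then translating one of the pieces by \eqref{translation}, should make $E_4$ collapse to a single term that combines with $S_4$ via \eqref{Pascal} to produce exactly the stated two-term expression. The identities \eqref{alternating} and \eqref{translation} will be invoked repeatedly to realign summation indices, as in Lemma \ref{S3E3}.

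The principal obstacle is bookkeeping rather than any conceptual difficulty: keeping track of six products of four binomials per $j$, correctly matching each with the right inner-sum evaluation, and identifying the precise telescoping that eats up every superfluous term so that exactly the two summands on the right-hand side remain. In particular, confirming that the extra contributions arising from the ``$q=j$'' boundary pieces of part (ii) and from $C_{m,1}^{k-1}$ cancel cleanly with parts of the generic double-sum contribution is the delicate step; the Chu-Vandermonde identity \eqref{Chu2} and the Pascal identity \eqref{Pascal} should be precisely the tools that force this cancellation.
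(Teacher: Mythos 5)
Your skeleton matches the paper's: split off $C_{m,1}^{k-1}$ (Lemma \ref{CijToPaths}(iii)), expand each $C_{m,j}^{k-1}$ for $2\le j\le n-2$ via the three pieces of Lemma \ref{CijToPaths}(ii), insert Corollary \ref{corpath}, and collapse the $p$- and $q$-sums with Lemma \ref{GPL}. But there is a genuine gap in your mechanism for the $j$-sums. After the $q$-sum is collapsed, the $j$-dependence that survives pairs binomials of the \emph{same} orientation: one gets sums such as $\sum_{j=2}^{n-2}\binom{j-2}{s_1}\binom{j-1}{s_2}$ and $\sum_{j=2}^{n-2}\binom{j-1}{s_1}\binom{j-1}{s_2}$ (the paper's $\lambda(s_1,s_2)$ and $\nu(s_1,s_2)$), not convolutions of the form $\sum_j\binom{j-1}{\alpha}\binom{n-j-1}{\beta}$ that you invoke. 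The Chu--Vandermonde identity \eqref{Chu2} does not evaluate same-orientation sums like $\nu(s_1,s_2)$ in closed form (that step was available in Lemma \ref{S3E3}, where the $j$-sums $N_1,N_2$ genuinely had the convolution shape, but it is not available here), and there is likewise no analogue of the $\binom{m+1}{\cdot}$ Pascal-splitting telescoping of $E_3$; so the route you sketch stalls exactly at the step you call ``bookkeeping.''

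The missing idea is that these $j$-sums are never evaluated at all. One keeps $\lambda$ and $\nu$ symbolic, notes via \eqref{Pascal} that $\lambda(s_1-1,s_2)+\lambda(s_1,s_2)=\nu(s_1,s_2)$, and shows that every $\nu$-weighted contribution cancels: the two boundary pieces of Lemma \ref{CijToPaths}(ii) combine into a single $\nu$-weighted sum, while the generic double-sum piece, after the $j$-summation $\sum_{j=2}^{n-2}\binom{j-1}{s}=\binom{n-2}{s+1}-\delta_{0,s}$ (whose Kronecker-delta terms absorb $C_{m,1}^{k-1}$ -- not a ``$p=m-1$ boundary layer'' as you guessed), splits into a clean part $S_4^*$ plus another $\nu$-weighted sum; the two $\nu$-weighted sums annihilate each other because $\nu(s_1,s_2)=\nu(s_2,s_1)$ and the accompanying factors form an antisymmetric difference under \eqref{alternating}. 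What remains is $S_4^*$, which after \eqref{translation} is the stated formula. (Also note the case $n=3$, where the range $2\le j\le n-2$ is empty, needs a separate direct check.) Your proposal correctly senses that a cancellation between the boundary pieces and the double sum is the crux, but it attributes it to tools (closed-form Chu--Vandermonde evaluation plus telescoping) that do not apply here, and it does not supply the symmetry argument that actually forces the cancellation.
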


\begin{proof}
The desired result is easily verified when $n=3$ 
and so we 
assume that $n>3$.
For $j, s\in \Z$, let $f_j(s):= {\binom{m-1}{s}}{\binom{j-1}{s}}$ and $g_j(s):= {\binom{m-2}{s-1}}{\binom{j-1}{s}}$.
In view of parts (iii) and (ii) of Lemma~\ref{CijToPaths} together with \eqref{binomzero} and Corollary~\ref{corpath}, we see that
\begin{equation}
\label{S4-7}
C_{m,1}^{k-1} = {\binom{n-2}{k-1}}{\binom{m-1}{k-1}} \quad \text{and} \quad
\sum_{j=2}^{n-2} C_{m,j}^{k-1} = S_4 + S_5+ S_6,
\end{equation}
where 
\begin{eqnarray*}
S_4 \! &=& \! \sum_{j=2}^{n-2}\sum_{p=1}^{m-1}\sum_{q=j+1}^{n-1}
 \mathop{\sum_{s_1+s_2=k-2}}_{s_1,s_2\ge 0} {\binom{p-1}{s_1}}{\binom{q-2}{s_1}} f_j(s_2) - {\binom{p}{s_2+1}}{\binom{q-2}{s_2}}g_j(s_1),
\\
S_5 \! &=& \! \sum_{j=2}^{n-2}\sum_{p=1}^{m-2}
 \mathop{\sum_{s_1+s_2=k-2}}_{s_1,s_2\ge 0} {\binom{p-1}{s_1}}{\binom{j-2}{s_1}} f_j(s_2) - {\binom{p}{s_2+1}}{\binom{j-2}{s_2}}g_j(s_1),
\\
S_6 \! &= & \! \sum_{j=2}^{n-2}
 \sum_{s_1+s_2=k-1} {\binom{m-2}{s_1}}{\binom{j-2}{s_1}} f_j(s_2) - {\binom{m-1}{s_2+1}}{\binom{j-2}{s_2}}g_j(s_1).
\end{eqnarray*}
Interchanging $s_1$ and $s_2$ in the second summand for $S_6$ as in \eqref{alternating}, we can write
\begin{equation}
\label{S6}
S_6 = \sum_{s_1+s_2=k-1} \lambda(s_1, s_2) \left\{{\binom{m-2}{s_1}}{\binom{m-1}{s_2}} - {\binom{m-1}{s_1+1}}{\binom{m-2}{s_2-1}}\right\},
\end{equation}
where for $s_1, s_2\in \Z$, we have let $\lambda(s_1, s_2) : = \sum_{j=2}^{n-2} {\binom{j-2}{s_1}}{\binom{j-1}{s_2}}$. Next, by Lemma~\ref{GPL},
$$
\sum_{p=1}^{m-2} {\binom{p-1}{s_1}} 
= {\binom{m-2}{s_1+1}} \quad \text{and} \quad \sum_{p=1}^{m-2} {\binom{p}{s_2+1}} = {\binom{m-1}{s_2+2}}
\quad \text{for } s_1, s_2\ge 0.
$$ 
Consequently, by interchanging summations and rearranging terms, we find
\begin{eqnarray}
\label{S5}
S_5 \! & = & \sum_{j=2}^{n-2} \mathop{\sum_{s_1+s_2=k-2}}_{s_1,s_2\ge 0} {\binom{m-2}{s_1+1}}{\binom{j-2}{s_1}}f_j(s_2)
- {\binom{m-1}{s_2+2}}{\binom{j-2}{s_2}}g_j(s_1) \nonumber \\
\! & = & \! \!\! \sum_{s_1+s_2=k-2} \lambda(s_1, s_2) \left\{{\binom{m-2}{s_1+1}}{\binom{m-1}{s_2}} - {\binom{m-1}{s_1+2}}{\binom{m-2}{s_2-1}}\right\} \nonumber \\
\! & = & \!\! \! \sum_{s_1+s_2=k-1} \lambda(s_1-1, s_2) \left\{{\binom{m-2}{s_1}}{\binom{m-1}{s_2}} - {\binom{m-1}{s_1+1}}{\binom{m-2}{s_2-1}}\right\}
\end{eqnarray} 
where the penultimate equality follows from \eqref{binomzero} and \eqref{alternating} by interchanging $s_1$ and $s_2$ in the second summand of the preceding formula, while the last equality follows from \eqref{translation}. Now using \eqref{Pascal} we easily see that for any $s_1, s_2\in \Z$,
$$
\lambda(s_1-1, s_2) + \lambda(s_1, s_2) = \nu(s_1, s_2), \quad\text{where} \quad \nu(s_1, s_2):= \sum_{j=2}^{n-2} {\binom{j-1}{s_1}}{\binom{j-1}{s_2}}.
$$
Hence we can combine \eqref{S5} and \eqref{S6} to obtain
\begin{equation}
\label{S5S6}
S_5 + S_6 =  \sum_{s_1+s_2=k-1} \nu (s_1, s_2) \left\{{\binom{m-2}{s_1}}{\binom{m-1}{s_2}} - {\binom{m-1}{s_1+1}}{\binom{m-2}{s_2-1}}\right\}.
\end{equation}
It remains to consider $S_4$ or rather $C_{m,1}^{k-1} + S_4$. This is a little more complicated, but it can be handled using arguments similar to those in the proof of Lemma~\ref{S3E3} as follows. First, by interchanging summations and using Lemma~\ref{GPL}, we find
$$
S_4 = \sum_{j=2}^{n-2} \mathop{\sum_{s_1+s_2=k-2}}_{s_1,s_2\ge 0}  {\binom{m-1}{s_1+1}}
\theta(s_1) f_j(s_2)  - {\binom{m}{s_2+2}} \theta(s_2) g_j(s_1)
$$
where for $s\in \Z$, we have let $\theta(s):= {\binom{n-2}{s+1}} - {\binom{j-1}{s+1}}$. Now observe that
if $s_1<0$ or $s_2<0$, then $\theta(s_1) f_j(s_2) = 0 =  \theta(s_2) g_j(s_1)$.
Thus, we may drop the condition $s_1,s_2\ge 0$ in the above expression for $S_4$,
and then alter each of the two summations  over $(s_1, s_2)$ 
using \eqref{translation} to write
$$
S_4 = \sum_{j=2}^{n-2} \sum_{s_1+s_2=k-1} {\binom{m-1}{s_1}}
\theta(s_1-1) f_j(s_2)  - {\binom{m}{s_2+1}} \theta(s_2-1) g_j(s_1).
$$
Next, we collate the terms involving $j$ and bring the summation over $j$ inside, and note that by Lemma~\ref{GPL},
$\sum_{j=2}^{n-2} {\binom{j-1}{s}} = {\binom{n-2}{s+1}} - {\delta_{0,s}}$ for any $s \ge 0$. This yields
\begin{eqnarray*}
S_4 = \sum_{s_1+s_2=k-1} && \! {\binom{m-1}{s_1}} {\binom{n-2}{s_1}} {\binom{m-1}{s_2}}\left[{\binom{n-2}{s_2+1}} - {\delta_{0,s_2}} \right]
 \\
&&\!  -  \ {\binom{m}{s_2+1}} {\binom{n-2}{s_2}} {\binom{m-2}{s_1-1}}\left[{\binom{n-2}{s_1+1}} - {\delta_{0,s_1}} \right] \\
&& \! -  \ {\binom{m-1}{s_1}} {\binom{m-1}{s_2}} \nu(s_1, s_2) + {\binom{m}{s_2+1}} {\binom{m-2}{s_1-1}} \nu(s_1, s_2).
\end{eqnarray*}
Since ${\binom{m-2}{s_1-1}} = 0$ when $s_1=0$, the only contribution of the terms involving Kronecker delta is when $s_2=0$, and it is
$-{\binom{m-1}{k-1}} {\binom{n-2}{k-1}}$, i.e., precisely $-C_{m,1}^{k-1}$. It follows that $C_{m,1}^{k-1} + S_4 = S_4^* + E_4$, where
\begin{eqnarray*}
S_4^* & = &
 \sum_{s_1+s_2=k-1}  {\binom{m-1}{s_1}} {\binom{n-2}{s_1}} {\binom{m-1}{s_2}} {\binom{n-2}{s_2+1}} \\
& & \qquad \qquad \qquad  - \; {\binom{m}{s_2+1}} {\binom{n-2}{s_2}} {\binom{m-2}{s_1-1}} {\binom{n-2}{s_1+1}}
\end{eqnarray*}
and
\begin{eqnarray}
\label{E4}
E_4 &=& \sum_{s_1+s_2=k-1} \nu(s_1, s_2) \left\{ {\binom{m}{s_2+1}} {\binom{m-2}{s_1-1}} - {\binom{m-1}{s_1}} {\binom{m-1}{s_2}} \right\} \nonumber\\
&=& \sum_{s_1+s_2=k-1} \nu(s_1, s_2) \left\{ {\binom{m}{s_1+1}} {\binom{m-2}{s_2-1}} - {\binom{m-1}{s_1}} {\binom{m-1}{s_2}} \right\},
\end{eqnarray}
where the last equality follows by interchanging $s_1$ and $s_2$ while noting that $\nu$ is symmetric in $s_1, s_2$. 

Now combining \eqref{S5S6} and \eqref{E4}, 
and then making an easy calculation using 
\eqref{Pascal}, we see that
$$
E_4+S_5+S_6 =  \sum_{s_1+s_2=k-1} \nu(s_1, s_2) \left\{{\binom{m-1}{s_1}}{\binom{m-2}{s_2-1}} - {\binom{m-2}{s_1-1}} {\binom{m-1}{s_2}}\right\} = 0,
$$
where the last equality follows by interchanging  $s_1$ and $s_2$ in one of the summations above.
Thus $\sum_{j=1}^{n-2} C_{m,j}^{k-1} = S_4^*$.
Finally, using \eqref{translation}, we readily see that $S_4^*$ is precisely the desired formula in the statement of the lemma.
\end{proof}

The results of Lemmas \ref{S1S2} and \ref{S4-S7} can be combined.

\begin{corollary}
\label{combn}
Let $k$ be a positive integer. Then
$C_{m,n-1}^k  + \sum_{j=1}^{n-2} C_{m,j}^{k-1}$ is equal to
$$
\sum_{t_1+t_2=k} {\binom{m-1}{t_1}}{\binom{n-2}{t_1}} {\binom{m-1}{t_2}}{\binom{n-2}{t_2}}
- {\binom{m-1}{t_2+1}}{\binom{n-2}{t_2}}{\binom{m-1}{t_1-1}}{\binom{n-2}{t_1}}.
$$
\end{corollary}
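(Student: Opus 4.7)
My plan is to simply add the two formulas provided by Lemmas~\ref{S1S2} and~\ref{S4-S7}, and then show that the difference with the target expression is zero. After factoring out the common $\binom{n-2}{t_1}\binom{n-2}{t_2}$ from every summand over $t_1+t_2=k$, the task reduces to verifying a single polynomial identity in four binomial coefficients involving only the parameter $m$.

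Concretely, after subtracting the target from the sum of the two lemma formulas, I will be left with a sum over $t_1+t_2=k$ whose summand (ignoring the $\binom{n-2}{t_1}\binom{n-2}{t_2}$ prefactor) is
\[
\binom{m-1}{t_1}\binom{m-1}{t_2} - \binom{m-1}{t_2+1}\binom{m-1}{t_1-1} - \binom{m-2}{t_1}\binom{m-1}{t_2} - \binom{m-1}{t_1}\binom{m-1}{t_2-1} + \binom{m-1}{t_2+1}\binom{m-2}{t_1-1} + \binom{m}{t_2+1}\binom{m-2}{t_1-2}.
\]
I will simplify this by three applications of the Pascal identity \eqref{Pascal}: write $\binom{m-1}{t_1}-\binom{m-2}{t_1}=\binom{m-2}{t_1-1}$, write $\binom{m-1}{t_1-1}-\binom{m-2}{t_1-1}=\binom{m-2}{t_1-2}$, and write $\binom{m}{t_2+1}-\binom{m-1}{t_2+1}=\binom{m-1}{t_2}$. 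After collecting terms and using Pascal once more to combine $\binom{m-2}{t_1-1}+\binom{m-2}{t_1-2}=\binom{m-1}{t_1-1}$, the expression collapses to
\[
\binom{m-1}{t_2}\binom{m-1}{t_1-1} - \binom{m-1}{t_1}\binom{m-1}{t_2-1}.
\]

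Finally, the resulting sum
\[
\sum_{t_1+t_2=k} \binom{n-2}{t_1}\binom{n-2}{t_2}\left[\binom{m-1}{t_2}\binom{m-1}{t_1-1} - \binom{m-1}{t_1}\binom{m-1}{t_2-1}\right]
\]
vanishes by a single application of the swap \eqref{alternating}: interchanging $t_1$ and $t_2$ in the first term exactly produces the second term, so the two halves cancel term-by-term along each diagonal $t_1+t_2=k$.

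The main obstacle I anticipate is merely bookkeeping: keeping track of all six binomial products during the Pascal reductions and making sure no sign or index is off by one. There is no deeper combinatorial content here beyond Pascal's identity and the $t_1 \leftrightarrow t_2$ symmetry of the summation range; once the correct Pascal splittings are chosen, the identity telescopes cleanly.
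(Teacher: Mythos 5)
Your proposal is correct and rests on essentially the same ingredients as the paper's proof: adding the formulas of Lemmas~\ref{S1S2} and~\ref{S4-S7} and reducing with the Pascal identity \eqref{Pascal} and the $t_1\leftrightarrow t_2$ swap \eqref{alternating}, only organized as ``show the difference with the target vanishes'' instead of the paper's termwise rewriting of Lemma~\ref{S4-S7}'s formula followed by termwise addition to Lemma~\ref{S1S2}'s. The one cosmetic slip is that your displayed summand is the target minus the lemma formulas rather than the other way around, which is immaterial since you show the resulting sum is zero.
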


\begin{proof}
Consider the formula for $\sum_{j=1}^{n-2} C_{m,j}^{k-1}$ given by Lemma~\ref{S4-S7}. This is a difference of two summations over $(t_1, t_2)\in \Z^2$ with $t_1+t_2=k$. Alter the first of these summations by interchanging $t_1$ and $t_2$,
while put
${\binom{m}{t_2+1}} = {\binom{m-1}{t_2}} + {\binom{m-1}{t_2+1}}$ in the second summation to split it into two summations.
Then using \eqref{Pascal}, we readily see that the formula 
for $\sum_{j=1}^{n-2} C_{m,j}^{k-1}$ becomes
$$
\sum_{t_1+t_2=k} {\binom{m-2}{t_1-1}}{\binom{n-2}{t_1}} {\binom{m-1}{t_2}}{\binom{n-2}{t_2}}
- {\binom{m-1}{t_2+1}}{\binom{n-2}{t_2}}{\binom{m-2}{t_1-2}}{\binom{n-2}{t_1}}.
$$ 
This can be added termwise, using \eqref{Pascal} once again, with the formula for $C_{m,n-1}^k$ given by Lemma~\ref{S1S2} to obtain the desired result.
\end{proof}
We are now ready for our main theorem. 

\begin{theorem}
\label{easyhilb}
The Hilbert series of $R/\Inot$ is given by
\begin{equation}
\label{easyHilbformula}
 \left(\frac{\sum_{e=0}^{m-1}{\binom{m-1}{e}}{\binom{n-1}{e}}z^e}{(1-z)^{m+n-1}}\right)^2.
\end{equation}
\end{theorem}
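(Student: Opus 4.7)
The plan is to assemble $h_k$ using Corollary~\ref{hkCij} and show that this equals the coefficient of $z^k$ in the square of the numerator of \eqref{easyHilbformula}. Splitting the sum in \eqref{hkeqn} according to whether $i=m$ or $i<m$, we get
$$
h_k = \left(C_{m,n-1}^k + \sum_{j=1}^{n-2}C_{m,j}^{k-1}\right) + \sum_{i=1}^{m-1}\sum_{j=1}^n C_{i,j}^{k-1},
$$
for which Corollary~\ref{combn} (call the resulting expression $A_k$) and Lemma~\ref{S3E3} (call it $B_k$) provide explicit binomial-sum formulas. The target is
$$
T_k := \sum_{t_1+t_2=k}\binom{m-1}{t_1}\binom{n-1}{t_1}\binom{m-1}{t_2}\binom{n-1}{t_2},
$$
so the task reduces to establishing $A_k+B_k=T_k$ for all $k\ge 1$. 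Since $h_0=1=T_0$ trivially, and the denominator $(1-z)^{2(m+n-1)}$ is already pinned down by Corollary~\ref{cor:jonov} together with Proposition~\ref{shellableHilb}(ii), the theorem follows from this identity.

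To verify $A_k+B_k=T_k$, I would apply Pascal's identity \eqref{Pascal} systematically on both sides. On the $h_k$ side, the ``larger'' binomials arising in $B_k$ can be expanded as $\binom{m}{t_2}=\binom{m-1}{t_2}+\binom{m-1}{t_2-1}$, $\binom{n}{t_1+1}=\binom{n-1}{t_1+1}+\binom{n-1}{t_1}$, and $\binom{n}{t_2}=\binom{n-1}{t_2}+\binom{n-1}{t_2-1}$, and each $\binom{n-1}{\cdot}$ further expanded via Pascal down to $\binom{n-2}{\cdot}$; this rewrites $A_k+B_k$ entirely in the alphabet $\binom{m-1}{\cdot}\binom{n-2}{\cdot}$. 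On the $T_k$ side, expanding each $\binom{n-1}{t_i}=\binom{n-2}{t_i}+\binom{n-2}{t_i-1}$ produces four summands, one of which is exactly the first term $\binom{m-1}{t_1}\binom{n-2}{t_1}\binom{m-1}{t_2}\binom{n-2}{t_2}$ appearing in $A_k$. Pairing the remaining three summands of $T_k$ against the expanded contribution from $B_k$ minus the negative term of $A_k$ becomes a term-by-term match, carried out with the summation alterations \eqref{translation} and \eqref{alternating} from \S \ref{subsec:alt} to align indices.

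The main obstacle will be the bookkeeping: after full Pascal expansion there will be many summands on each side, and the correct pairing requires judicious index shifts. I expect the key cancellation to be that the negative summand in $A_k$ (involving $\binom{m-1}{t_2+1}\binom{m-1}{t_1-1}$) combines with the lowest-order pieces of the expanded $\binom{m}{t_2}$ and $\binom{n}{t_1+1}$ in $B_k$, with the remaining positive part reassembling cleanly into the three leftover $T_k$-summands via shifts of the form $t_i\mapsto t_i\pm 1$. This is expected to be a careful elementary calculation rather than a conceptual hurdle, relying solely on identities from Section~\ref{sec2}; a cleaner alternative, should direct matching prove unwieldy, would be to regroup $A_k+B_k$ as a telescoping sum analogous to the one employed at the end of the proof of Lemma~\ref{S3E3}.
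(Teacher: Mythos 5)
Your proposal follows the paper's proof essentially verbatim: reduce via Corollary~\ref{hkCij} and the split \eqref{hkeqn2} to the identity $A_k+B_k=T_k$, with $A_k$, $B_k$ given by Corollary~\ref{combn} and Lemma~\ref{S3E3}, and then verify this finite binomial identity using \eqref{Pascal} together with the shifts \eqref{translation} and \eqref{alternating}. The ``bookkeeping'' you defer is exactly the paper's short final computation (pairing $P_3(t_1-1,t_2+1)-P_4(t_2,t_1)$ with the negative term of $A_k$ via Pascal, then reassembling into $T_k$ after shifts $t_i\mapsto t_i\pm 1$), and your anticipated cancellation matches it.
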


\begin{proof}
First note that \eqref{easyHilbformula} is of the form $(1-z)^{-2(m+n-1)}\sum_{k=0}^{2m-2} h_k^*z^k$, where
\begin{equation}
\label{hkP}
h^*_k= \sum_{t_1+t_2=k} {\binom{m-1}{t_1}}{\binom{n-1}{t_1}}{\binom{m-1}{t_2}}{\binom{n-1}{t_2}} \quad
\text{ for } k\in\Z.
\end{equation}
On the other hand, by Corollary~\ref{hkCij}, we see that the 
Hilbert series of $R/\Inot$ is given by $(1-z)^{-2(m+n-1)}\sum_{k\ge 0} h_k z^k$, where $h_0=1$ and
\begin{equation}
\label{hkeqn2}
h_k = \left(C_{m,n-1}^k + \sum_{j=1}^{n-2} C_{m,j}^{k-1} \right) + \sum_{i=1}^{m-1}\sum_{j=1}^n C_{i,j}^{k-1}  \quad \text{ for } k\ge 1.
\end{equation}
It is clear that $h^*_0=1=h_0$ and so it suffices to show that $h^*_k=h_k$ for all $k\ge 1$. In view of Corollary~\ref{combn} and Lemma~\ref{S3E3}, this is equivalent to showing that
$$
\sum_{t_1+t_2=k} P_1(t_1,t_2) - P_2(t_1,t_2) + P_3(t_1,t_2) - P_4(t_1,t_2) - P(t_1,t_2) = 0 \quad \text{ for } k\ge 1,
$$
where $P_i(t_1,t_2)$ for $i=1, \dots, 4$ and $P(t_1,t_2)$ are the relevant summands, namely,
\begin{eqnarray*}
& \textstyle{P_1(t_1,t_2) := {\binom{m-1}{t_1}}{\binom{n-2}{t_1}} {\binom{m-1}{t_2}}{\binom{n-2}{t_2}}},
& \textstyle{P_2(t_1,t_2) := {\binom{m-1}{t_2+1}}{\binom{n-2}{t_2}}{\binom{m-1}{t_1-1}}{\binom{n-2}{t_1}}}, \\
& \textstyle{P_3(t_1,t_2) := {\binom{m}{t_2}}{\binom{n}{t_1+1}} {\binom{m-1}{t_1}}{\binom{n-2}{t_2-1}}},
& \textstyle{P_4(t_1,t_2) := {\binom{m-1}{t_1}}{\binom{n}{t_2}}{\binom{m-1}{t_2-1}}{\binom{n-2}{t_1}}},
\end{eqnarray*}
and $P(t_1,t_2) := {\binom{m-1}{t_1}}{\binom{n-1}{t_1}}{\binom{m-1}{t_2}}{\binom{n-1}{t_2}}$ for $t_1, t_2\in \Z$.
To this end, we will make an extensive use of alterations as in \eqref{translation} and \eqref{alternating}; more specifically, the fact that
$$
\sum_{t_1+t_2=k} f(t_1, t_2) = \sum_{t_1+t_2=k} f(t_2, t_1) = \sum_{t_1+t_2=k} f(t_1+1, t_2-1) = \sum_{t_1+t_2=k} f(t_2+1, t_1-1)
$$
for any $f:\Z^2\to \Q$ with finite support and any $k\in \Z$. Now fix any positive integer $k$ and any $(t_1,t_2)\in \Z^2$ with $t_1+t_2=k$. Observe that
$$
P_3(t_1-1, t_2+1) - P_4(t_2,t_1) = {\binom{m-1}{t_2+1}}{\binom{n}{t_1}} {\binom{m-1}{t_1-1}}{\binom{n-2}{t_2}} .
$$
Using (\ref{Pascal}) twice, we may substitute $ {\binom{n-2}{t_1}} + {\binom{n-2}{t_1-1}} + {\binom{n-1}{t_1-1}}$ for ${\binom{n}{t_1}}$ in the RHS of the above identity to obtain 
$$
-P_2(t_1, t_2) + P_3(t_1-1, t_2+1) - P_4(t_2,t_1) = Q_1(t_1, t_2) + Q_2(t_1, t_2),
$$
where
$$
\textstyle{Q_1(t_1,t_2) := {\binom{m-1}{t_2+1}}{\binom{n-2}{t_1-1}} {\binom{m-1}{t_1-1}}{\binom{n-2}{t_2}}} \text{ and }
\textstyle{Q_2(t_1,t_2) := {\binom{m-1}{t_2+1}}{\binom{n-1}{t_1-1}}{\binom{m-1}{t_1-1}}{\binom{n-2}{t_2}}}.
$$
Finally, observe that $P_1(t_1, t_2) + Q_1(t_1+1, t_2-1) + Q_2(t_2+1,t_1-1) = P(t_1,t_2)$. This yields the desired result.
\end{proof} 

It may be noted that in view of \eqref{dethilb} and \eqref{easyHilbformula}, the Hilbert series of the principal component $Z_0$
is precisely the square of the Hilbert series of the base variety $\ztwo$, and as such, Theorem~\ref{easymult} could be deduced as a consequence of Theorem~\ref{easyhilb}.

As an application of Theorem~\ref{easyhilb}, we will now compute the $a$-invariant of the coordinate ring $R/\Inot$ of the principal component $Z_0$ of $\ztwoone$ and determine when $Z_0$ is Gorenstein. Recall that if $A$ is a finitely generated, positively graded Cohen-Macaulay algebra over a field, then $A$ admits a graded canonical module $\omega_A$ and the $a$-invariant of $A$ 
is defined as the negative of the least degree of a generator of $\omega_A$. If the Hilbert series of $A$ is given by $H_A(z)= h(z)/(1-z)^d$, where $d=\dim A$ and $h(z)\in \Q[z]$ with $h(1)\ne 0$, then
the $a$-invariant of $A$ is the order of pole of $H_A(z)$ at infinity, viz., $-\left(d-\deg h(z)\right)$. Moreover, the Hilbert series of $\omega_A$ is given by $H_{\omega_A}(z) = (-1)^d H_A(z^{-1})$.  As a general reference for these notions and results, one may consult \cite{BH}, especially Sections 3.6 and 4.4. The following result is an analogue of a theorem of Gr\"abe \cite{Gr} (see also \cite[Thm. 4]{JSPI})
for classical determinantal varieties which says that if $1\le r \le m\le n$, then the $a$-invariant of (the coordinate ring of) ${\mathcal{Z}_{r}^{m,n}}$ is $-(r-1)n$.

\begin{corollary}
\label{ainv}
The $a$-invariant of $R/\Inot$ is equal to $-2n$ and the Hilbert series of the graded canonical module of $R/\Inot$ is given by
\begin{equation}
\label{Hilbomega}
 \left(\frac{\sum_{e=0}^{m-1}{\binom{m-1}{e}}{\binom{n-1}{e}}z^{m+n-e-1}}{(1-z)^{m+n-1}}\right)^2.
\end{equation}
\end{corollary}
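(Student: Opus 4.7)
The plan is to extract both assertions directly from Theorem~\ref{easyhilb}. Set
$$
P(z):=\sum_{e=0}^{m-1}\binom{m-1}{e}\binom{n-1}{e}z^e,
$$
so that Theorem~\ref{easyhilb} reads $H_{R/\Inot}(z) = P(z)^2/(1-z)^{2(m+n-1)}$. By Corollary~\ref{cor:jonov}, $d:=\dim R/\Inot = 2(m+n-1)$, so to apply the characterization of the $a$-invariant recalled just above the statement we need $P(z)^2$ in lowest terms with respect to $(1-z)^d$, together with its degree.

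First I would verify that $P(1)\ne 0$: by the Chu--Vandermonde identity (a special case of Lemma~\ref{Chu}), $P(1)=\sum_{e=0}^{m-1}\binom{m-1}{e}\binom{n-1}{e}=\binom{m+n-2}{m-1}$, which is positive. Next, since $m\le n$, the leading coefficient of $P(z)$ equals $\binom{m-1}{m-1}\binom{n-1}{m-1}=\binom{n-1}{m-1}\ne 0$, so $\deg P(z)=m-1$ and $\deg P(z)^2=2(m-1)$. The formula for the $a$-invariant then yields
$$
a(R/\Inot)=\deg P(z)^2 - d = 2(m-1)-2(m+n-1)=-2n,
$$
which proves the first assertion.

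For the Hilbert series of the graded canonical module $\omega$, I would use $H_{\omega}(z)=(-1)^d H_{R/\Inot}(z^{-1})$; since $d$ is even, the sign drops out. The identity $1-z^{-1}=-(1-z)/z$ gives
$$
\frac{1}{(1-z^{-1})^{m+n-1}} = \frac{(-1)^{m+n-1}\,z^{m+n-1}}{(1-z)^{m+n-1}},
$$
and the index substitution $e\mapsto m-1-e$ in $z^{m-1}P(z^{-1})$ shows
$$
z^{m+n-1}P(z^{-1}) = \sum_{e=0}^{m-1}\binom{m-1}{e}\binom{n-1}{e}\,z^{m+n-e-1}.
$$
Substituting into $P(z^{-1})/(1-z^{-1})^{m+n-1}$ and then squaring, the factors of $(-1)^{m+n-1}$ get squared to $1$, yielding exactly \eqref{Hilbomega}. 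The only point to watch is the bookkeeping of signs and shifts in this reciprocal computation; there is no combinatorial obstacle beyond what Theorem~\ref{easyhilb} already provides.
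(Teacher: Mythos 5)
Your proposal is correct and follows essentially the same route as the paper: read off the $a$-invariant from the degree of the numerator in Theorem~\ref{easyhilb} (after checking it does not vanish at $z=1$) and obtain \eqref{Hilbomega} from $H_{\omega_A}(z)=(-1)^d H_A(z^{-1})$, a computation the paper leaves implicit and you spell out. The only thing to add is an explicit appeal to the Cohen--Macaulayness of $R/\Inot$ (from \cite[Thm.~1.2]{boyan}), which is needed before the canonical module and the $a$-invariant characterization can be invoked.
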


\begin{proof} We know from \cite[Thm. 1.2]{boyan} that $A= R/\Inot$ is Cohen-Macaulay and it is obviously a finitely generated, positively graded  $\F$-algebra. Moreover, by Theorem~\ref{easyhilb}, the Hilbert series of $A$ is given by $h_0(z)/(1-z)^{2(m+n-1)}$, where
$$
h_0(z) = \left(\sum_{e=0}^{m-1}{\binom{m-1}{e}}{\binom{n-1}{e}}z^e\right)^2.
$$
Since $2\le m\le n$, we see that 
$h_0(z)$ is a polynomial in $z$ of degree $2(m-1)$ with leading coefficient ${\binom{n-1}{m-1}}^2$ and all other coefficients nonnegative integers; in particular, $h_0(1)\ne 0$. Hence the $a$-invariant of $A= R/\Inot$ is $2(m-1) - 2(m+n-1) = -2n$ and also that the Hilbert series of $\omega_A$ is
given by \eqref{Hilbomega}.
\end{proof}

The following result is an analogue of a theorem of Svanes \cite{Svanes} (see also \cite{CH})
for classical determinantal varieties which says that 
for any $r\ge 1$, (the coordinate ring of) ${\mathcal{Z}_{r}^{m,n}}$ is
Gorenstein if and only if $m=n$.

\begin{corollary}
\label{Gorenstein}
The coordinate ring $R/\Inot$ of $Z_0$ is Gorenstein if and only if $m=n$.
\end{corollary}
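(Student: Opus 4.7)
The plan is to use the well-known criterion that a Cohen--Macaulay positively graded algebra $A$ over a field is Gorenstein if and only if its $h$-polynomial is palindromic, i.e.\ writing the Hilbert series as $h(z)/(1-z)^{\dim A}$ with $h(1)\ne 0$, one has $z^{\deg h}h(1/z)=h(z)$. (See, e.g., \cite[\S 4.4]{BH}.) We already know from \cite[Thm.~1.2]{boyan} that $R/\Inot$ is Cohen--Macaulay, and by Theorem~\ref{easyhilb} its $h$-polynomial is $p(z)^2$, where
$$
p(z)=\sum_{e=0}^{m-1}\binom{m-1}{e}\binom{n-1}{e}z^e.
$$

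The first step is to observe that $p(z)$ has strictly positive integer coefficients and $\deg p=m-1$. Let $q(z):=z^{m-1}p(1/z)$ be the reverse of $p$; then $q$ also has positive integer coefficients and $p(z)^2$ being palindromic of degree $2(m-1)$ translates to $p(z)^2=q(z)^2$ in $\Q[z]$. Since both $p$ and $q$ have positive leading coefficients, unique factorization in $\Q[z]$ forces $p(z)=q(z)$. Conversely, if $p=q$ then $p^2$ is clearly palindromic. Thus $R/\Inot$ is Gorenstein if and only if $p(z)$ itself is palindromic.

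The second step is to determine when $p$ is palindromic. Comparing coefficients of $z^e$ and $z^{m-1-e}$ in $p$, palindromicity amounts to
$$
\binom{m-1}{e}\binom{n-1}{e}=\binom{m-1}{e}\binom{n-1}{m-1-e}\qquad\text{for }0\le e\le m-1,
$$
i.e.\ $\binom{n-1}{e}=\binom{n-1}{m-1-e}$ for all such $e$. Setting $e=0$ forces $\binom{n-1}{m-1}=1$, and since $2<m\le n$ the only possibility is $m=n$. Conversely, if $m=n$ then the identity $\binom{n-1}{e}=\binom{n-1}{m-1-e}$ is just the standard symmetry of binomial coefficients, so $p$ is palindromic.

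Combining the two steps yields the equivalence. There is no serious obstacle here; the only thing to be careful about is the elementary but essential observation that squaring a polynomial with positive coefficients preserves (and reflects) palindromicity, which we verified via the factorization $p^2=q^2\Rightarrow p=q$ above. As an alternative route, one can instead compare the Hilbert series of $R/\Inot$ given by Theorem~\ref{easyhilb} with the Hilbert series of its canonical module given by \eqref{Hilbomega} in Corollary~\ref{ainv}, and check that the latter is a degree shift of the former precisely when $m=n$; this would again reduce to palindromicity of $p(z)$.
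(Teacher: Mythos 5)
Your proposal takes essentially the same route as the paper: invoke Theorem~\ref{easyhilb} (or equivalently Corollary~\ref{ainv}), reduce the Gorenstein condition to the palindromicity of the $h$-polynomial $p(z)^2$, pass from $p(z)^2=q(z)^2$ to $p=q$ using positivity of the leading coefficients, and then note that $\binom{n-1}{e}=\binom{n-1}{m-1-e}$ for all $0\le e\le m-1$ together with $2<m\le n$ forces $m=n$. That part of your argument is correct and matches the paper's computation.

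There is, however, one genuine gap: the criterion you quote --- that a positively graded Cohen--Macaulay algebra is Gorenstein if and only if its $h$-polynomial is palindromic --- is false at that level of generality, and the false direction is exactly the one you need to conclude that $R/\Inot$ \emph{is} Gorenstein when $m=n$. The implication ``Gorenstein $\Rightarrow$ symmetric $h$-vector'' holds for any graded Cohen--Macaulay algebra, but the converse requires an extra hypothesis: Stanley's theorem \cite[Thm.~4.4]{Stan} (see also \cite[Cor.~4.4.6]{BH}) assumes that $A$ is a Cohen--Macaulay \emph{domain}. Without it there are standard counterexamples; for instance, the homogeneous coordinate ring of four points in $\PP^2$ of which exactly three are collinear is a reduced one-dimensional Cohen--Macaulay ring with symmetric $h$-vector $(1,2,1)$ that is not Gorenstein. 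So your proof needs the additional input that $\Inot$ is a prime ideal, i.e.\ that $A=R/\Inot$ is a domain; this is known from \cite[Prop.~3.3]{KoSe2} and is precisely what the paper cites (together with Cohen--Macaulayness from \cite[Thm.~1.2]{boyan}) before applying Stanley's criterion. With that one sentence added, your argument is complete; your alternative route via \eqref{Hilbomega} has the same dependence, since identifying $\omega_A$ with a shift of $A$ from an equality of Hilbert series again relies on Stanley's theorem for domains.
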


\begin{proof}
By \cite[Thm. 1.2]{boyan} and \cite[Prop. 3.3]{KoSe2}, 
$A= R/\Inot$ is a Cohen-Macaulay domain.
Hence from a well-known result of Stanley \cite[Thm. 4.4]{Stan} (see also \cite[Cor. 4.4.6]{BH}), we see that $A$ is Gorenstein if and only if
$H_A(z) = (-1)^dz^aH_A(z^{-1})$ for some $a\in \Z$. Moreover, in this case the integer $a$ is necessarily the $a$-invariant of $A$. Thus, from
Corollary~\ref{ainv}, we see that $R/\Inot$ is Gorenstein if and only if
$$
\left[\sum_{e=0}^{m-1}{\binom{m-1}{e}}{\binom{n-1}{e}}z^e\right]^2 =
\left[\sum_{e=0}^{m-1}{\binom{m-1}{e}}{\binom{n-1}{e}}z^{m-1-e}\right]^2.
$$
Since both the polynomials inside the square brackets on the two sides of the above equality have positive leading coefficients, it follows that
$R/\Inot$ is Gorenstein if and only if ${\binom{n-1}{e}} = {\binom{n-1}{m-1-e}}$ for all $e=0,1,\dots, m-1$. Since $1< m-1 \le n-1$, the latter clearly holds if and only if $m=n$.
\end{proof}

\section*{Acknowledgments}
\begin{small}
{
The first author is partially supported by Indo-Russian project INT/RFBR/P-114 from the Department of Science \& Technology, Govt. of India and  IRCC Award grant 12IRAWD009 from IIT Bombay.
The third author
was supported in part by NSF grants {DMS-0700904} and CCF-1318260 during the preparation of the paper.  He would like to thank the mathematics department at Indian Institute of Technology Bombay for its warm hospitality throughout the year-long visit during some of which part of this work was done. The first author would like to thank the mathematics department at California State University Northridge for the summer visit during which this collaboration took root.  The work presented in this paper originated from the Masters thesis of the second author at California State University Northridge, and is a continuation of his earlier paper \cite{boyan}.}
\end{small}
%

\end{document}